\DeclareMathOperator{\Res}{Res}
\newtheorem{lemma}{Lemma}
\newtheorem{theorem}[lemma]{Theorem}
\newtheorem{facts}[lemma]{Facts}
\newtheorem{example}[lemma]{Example}
\newtheorem{corollary}[lemma]{Corollary}
\begin{document}

	\title[Symmetry of meromorphic differentials, and integer
	partitions]{Symmetry of meromorphic differentials produced
		by involution identity, \\ and relation to integer partitions}

	\author{Alexander Hock}
	\author{Sergey Shadrin}
	\author{Raimar Wulkenhaar}
	
	\address{Institut für Mathematik,
		Ruprecht-Karls-Universität Heidelberg, \newline \hspace*{1.1em}%
		Im Neuenheimer Feld 205,
		69120 Heidelberg,
		Germany}
	\email{ahock@mathi.uni-heidelberg.de}
	
	\address{Korteweg--De Vriesinstituut voor Wiskunde, Universiteit van Amsterdam, \newline \hspace*{1.1em}%
		Postbus 94248, 1090GE Amsterdam, The Netherlands}
	\email{s.shadrin@uva.nl}
	
	\address{Mathematisches Institut, Universit\"at M\"unster,
		\newline \hspace*{1.1em}%
		Einsteinstr.\ 62,   48149 M\"unster, Germany}
	\email{raimar@math.uni-muenster.de}

	\begin{abstract}
		We prove that meromorphic differentials
		$\omega^{(0)}_n(z_1,...,z_n)$ which are recursively generated by an
		involution identity are symmetric in all their arguments
		$z_1,...,z_n$. The proof involves an intriguing combinatorial identity
		between integer partitions into given number of parts.
	\end{abstract}
	
	\subjclass[2010]{05A17, 30D05, 32A20}
	\keywords{Meromorphic forms on Riemann surfaces;
		Residue calculus; Involution; Integer partitions}

	\maketitle
	
	\section{Introduction}
	
	In \cite{Hock:2021tbl} the first and the third named authors have investigated a construction of
	meromorphic differentials $\omega^{(0)}_n(z_1,....,z_n)$ 
	on $n$ copies of $\mathbb{P}^1$ which is governed by a 
	holomorphic involution $\iota:\mathbb{P}^1\to\mathbb{P}^1$, $\iota^2z\equiv z$, 
	different from the identity.
	Such involutions are  necessarily of the form
	$\iota z =\frac{az+b}{cz-a}$ for complex numbers $a,b,c$ with $a^2+ bc\neq 0$.
	The construction starts with 
	\begin{align}
		\omega^{(0)}_{2}(w,z)&=B(w,z) -B(w,\iota z)\;,
		\label{om02}\\  
		&\text{where}\qquad 
		B(w,z) = \frac{dw\,dz}{(w-z)^2}
		\nonumber
	\end{align}
	is the Bergman kernel on $\mathbb{P}^1\times \mathbb{P}^1$, and
	proceeds recursively from an \emph{involution identity}
	\begin{align}
		&  \omega^{(0)}_{|I|+1}(z,I)
		+\omega^{(0)}_{|I|+1}(\iota z,I)
		\label{eq:flip-om}
		\\
		&=\sum_{s=2}^{|I|} \sum_{\substack{I_1\sqcup ...\sqcup I_s=I\\I_1,...,I_s\neq \emptyset}}
		\frac{1}{s} \Res\displaylimits_{q= z}  \Big(
		\frac{dy(z) dx(q)}{(y(z)-y(q))^{s}}  \prod_{j=1}^s
		\frac{\omega^{(0)}_{|I_j|+1}(q,I_j)}{dx(q)}
		\Big)\;,
		\nonumber
	\end{align}
	where $I=\{u_1,...,u_m\}$ and
	$|I|=m$.  Here
	\begin{itemize}
		\item $x:\mathbb{P}^1\to \mathbb{P}^1$ is a ramified covering with
		simple ramification points $\{\beta_i\}$; and such that $\iota$ does
		not fix or permute any ramification point of $x$, and that
		$\iota\beta_i$ is not a pole of $x$;

		\item $y(z)=-x(\iota z)$;

		\item the summation is over all partitions $I_1\sqcup ...\sqcup I_s=I$
		of $I=\{u_1,...,u_m\}$ into disjoint (and here non-empty) subsets
		$I_j$ of any order.
	\end{itemize}
	The first and the third named authors have proved in \cite{Hock:2021tbl} that, under mild assumptions on
	the location of poles, the $\omega^{(0)}_n$ satisfy linear and
	quadratic loop equations and obey blobbed topological recursion
	\cite{Borot:2015hna}. Loop equations in the quartic analogue of the
	Kontsevich model lead so such $\omega^{(0)}_n$ for the special
	involution $\iota z=-z$.  See \cite{Hock:2021tbl, Branahl:2020yru}.

	Although surprising at first sight, $\omega^{(0)}_2$ is symmetric: one
	has $B(w,\iota z)=B(\iota w,z)$ and $B(w,z)=B(\iota w,\iota z)$.  This
	paper addresses the important question whether all
	$\omega^{(0)}_n(z_1,....,z_n)$ defined by \eqref{om02} and
	\eqref{eq:flip-om} are symmetric in their arguments $z_i$. This
	symmetry is by no means obvious; it has not been proved so far. We
	will show that symmetry is reduced to the following purely combinatorial statement about
	integer partitions into a given number of parts:

\begin{theorem}
\label{conj:main}
  Let $\mathsf{P}_k(n)$ be the set of partitions of an integer
  $n\geq 1$ into $1\leq k\leq n$ parts.
  For any given integers $(s,k,l)$ with $k,l\geq 0$ and $s\geq \max(k+l,1)$
  and any given partition $\nu \in \mathsf{P}_s(2s-k-l)$ one has
\begin{align} \label{eq:Conjecture-Partitions}
  s!&=
  \sum_{\substack{(r-l)+p_1+...+p_l=s\\
   r-l\geq k,~p_1,...,p_l\geq 1}}~
  \sum_{\mu \in  \mathsf{P}_{r-l}(2r-k-2l)}\;
  \sum_{\rho_1 \in \mathsf{P}_{p_1}(2p_1-1)}\cdots
  \sum_{\rho_l \in \mathsf{P}_{p_l}(2p_l-1)}
  \nonumber
  \\
  &\times   \binom{\nu}{\mu, \rho_1,...,\rho_l} (r-l)!
(p_1-1)!\cdots (p_l-1)! \;.
\end{align}  
By
$\binom{\nu}{\mu_1,...,\mu_l}$ we denote the multinomial coefficient of partitions,
\begin{align*}
 & \binom{\nu}{\mu_1,...,\mu_l}=\lim_{N\to \infty} \prod_{k=1}^N
  \binom{h_k}{g_{k,1},...,g_{k,l}}
  \\
&\text{if} \qquad
  \nu=(...,\underbrace{2,...,2}_{h_2},  \underbrace{1,...,1}_{h_1})\;,\qquad
  \mu_i=(...,\underbrace{2,...,2}_{g_{2,i}},  \underbrace{1,...,1}_{g_{1,i}})\;,
\end{align*}
where $\binom{h_k}{g_{k,1},...,g_{k,l}}$ are the usual multinomial
coefficients which vanish unless $h_k= g_{k,1}+...+g_{k,l}$.
\end{theorem}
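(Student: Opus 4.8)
The plan is to reinterpret the right-hand side of \eqref{eq:Conjecture-Partitions} as the number of certain ordered set partitions of the $s$ parts of $\nu$, and then to exhibit a bijection between these structures and the set of all $s!$ linear orders of the parts; this will in particular make the independence of the right-hand side from the choice of $\nu\in\mathsf{P}_s(2s-k-l)$ transparent. Concretely, I would first unfold the combinatorics of the summand. Label the parts of $\nu$ as $\nu_1,\dots,\nu_s$. By definition of the multinomial coefficient of partitions, $\binom{\nu}{\mu,\rho_1,\dots,\rho_l}$ counts the colourings $c\colon\{1,\dots,s\}\to\{0,1,\dots,l\}$ for which $\{\nu_j:c(j)=0\}=\mu$ and $\{\nu_j:c(j)=i\}=\rho_i$ as submultisets. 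Summing over all admissible $\mu,\rho_1,\dots,\rho_l$ and over $r,p_1,\dots,p_l$ — the conditions $r-l\ge k$ and $p_i\ge 1$ need not be imposed separately, since $\mathsf{P}_q(N)=\emptyset$ once $N<q$, which already enforces them — and writing $m=|c^{-1}(0)|$, $p_i=|c^{-1}(i)|$, the factor $(r-l)!=m!$ counts the linear orders of $c^{-1}(0)$ and $(p_i-1)!$ counts the cyclic orders of $c^{-1}(i)$. Hence the right-hand side equals the number of tuples $(M;P_1,\dots,P_l)$ where $\{1,\dots,s\}=M\sqcup P_1\sqcup\dots\sqcup P_l$, $M$ carries a linear order, each $P_i$ carries a cyclic order, and $\sum_{j\in M}\nu_j=2|M|-k$, $\sum_{j\in P_i}\nu_j=2|P_i|-1$ for all $i$. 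Put $\epsilon_j:=2-\nu_j$ (so $\epsilon_j\le1$, as $\nu_j\ge1$); the conditions become $\sum_{j\in M}\epsilon_j=k$ and $\sum_{j\in P_i}\epsilon_j=1$, and $\sum_{j=1}^{s}\epsilon_j=2s-(2s-k-l)=k+l$.

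Next I would cut the cyclic blocks open. Reading each $P_i$ as a cyclic word in the integers $\epsilon_j\le1$ with total sum $1$, the cycle lemma (Dvoretzky--Motzkin) produces a \emph{unique} rotation $\widehat P_i$ of $P_i$ all of whose partial sums are $\ge1$. Let $\Phi$ send $(M;P_1,\dots,P_l)$ to the linear order of $\{1,\dots,s\}$ obtained by writing out $M$ in its order, then $\widehat P_1,\widehat P_2,\dots,\widehat P_l$ in succession. I claim $\Phi$ is a bijection onto the set of all $s!$ linear orders, which proves the theorem.

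To verify the claim, attach to a linear order $\sigma$ the path $S_0=0$, $S_t=\epsilon_{\sigma(1)}+\dots+\epsilon_{\sigma(t)}$. Since the only positive value among the $\epsilon_j$ is $1$, this path never jumps over an integer going up; as $S_s=k+l$ it visits each of the levels $k,k+1,\dots,k+l-1$, so the position $q_i$ of its last visit to level $k+i-1$ is well defined for $i=1,\dots,l$ (set $q_{l+1}:=s$). If $\sigma=\Phi(M;P_1,\dots,P_l)$, then along the block $\widehat P_i$ the path starts at height $k+i-1$, stays $\ge k+i$ at every later position of that block, and ends it at height $k+i$ (this uses the cycle-lemma property of $\widehat P_i$), while the $M$-block ends at height $k$; consequently the last visit to level $k+i-1$ occurs exactly at the right endpoint of the block $\widehat P_{i-1}$ (with $\widehat P_0:=M$), so $q_1<q_2<\dots<q_l$ mark precisely the block boundaries and $(M;P_1,\dots,P_l)$ together with all its orders is read back off $\sigma$. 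Conversely, for an arbitrary $\sigma$ one lets $q_1<\dots<q_l$ be the last-visit positions of the levels $k,\dots,k+l-1$ and cuts $\sigma$ into $M=(\sigma(1),\dots,\sigma(q_1))$ and $P_i=(\sigma(q_i+1),\dots,\sigma(q_{i+1}))$; on the stretch $(q_i,q_{i+1}]$ the path cannot drop below $k+i-1$ — were it to, it would pass back through $k+i-1$, since upward steps have size $1$, contradicting the choice of $q_i$ — hence every partial $\epsilon$-sum of the block $P_i$ is $\ge1$, so that block is the cycle-lemma rotation $\widehat P_i$ of the cyclic order it carries, and applying $\Phi$ to the resulting tuple returns $\sigma$. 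Thus $\Phi$ and the cutting map are mutually inverse, and the right-hand side of \eqref{eq:Conjecture-Partitions} equals $s!$.

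The crux is the bijection in the last step — checking that the prescription ``last visit to level $k+i-1$'' really locates the block boundaries of the image of $\Phi$, and that the blocks cut out of an arbitrary $\sigma$ this way are genuinely the canonical cycle-lemma rotations; this is exactly where the hypotheses $\nu_j\ge1$ (equivalently $\epsilon_j\le1$) and $\sum_j\epsilon_j=k+l$ are used. The unfolding of the summand in the first step is routine bookkeeping with the multinomial coefficient of partitions, and the cycle lemma invoked in the second step is classical. As an independent sanity check one may instead evaluate the right-hand side directly for the particular partition $\nu=(2^{\,s-k-l},1^{\,k+l})$, where the nested sums collapse, by the Vandermonde convolution, to $(k+l)!\,(s-k-l)!\,\binom{s}{k+l}=s!$.
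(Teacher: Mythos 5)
Your proposal is correct, and it takes a genuinely different route from the paper's. You reinterpret the right-hand side of \eqref{eq:Conjecture-Partitions} as a weighted count of ordered set partitions $\{1,\dots,s\}=M\sqcup P_1\sqcup\dots\sqcup P_l$ of the (labelled) parts of $\nu$, with a linear order on $M$ and cyclic orders on the $P_i$, subject to the $\epsilon$-sum constraints ($\epsilon_j=2-\nu_j\le 1$), and then you biject these objects with all $s!$ linear orders using the Dvoretzky--Motzkin cycle lemma together with the ``last visit to level $k+i-1$'' decomposition of the associated lattice path; the delicate points (unit up-steps, well-definedness of the last-visit positions, strict monotonicity $q_1<\dots<q_l<s$, the degenerate cases $l=0$ and $M=\emptyset$ when $k=0$, i.e.\ the convention $\mathsf{P}_0(0)=\{\emptyset\}$ that the paper also uses implicitly) all check out. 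The paper proceeds quite differently: it first describes the structure of $\nu\in\mathsf{P}_s(2s-k-l)$ in terms of its parts $a_j\ge 2$, rewrites \eqref{eq:Conjecture-Partitions} as a multinomial identity over splittings $I_0\sqcup\dots\sqcup I_l=\{1,\dots,N\}$ (Corollary~\ref{cor:Equivalent-Reformulation}), proves the $l=1$ case as an identity of polynomials in auxiliary variables $b_1,\dots,b_M$ by induction on $M$ via a difference-equation argument and Zariski density (Lemma~\ref{lem:Comb-l=1}), and then inducts on $l$. Your argument buys a one-shot bijective explanation that needs no induction beyond the classical cycle lemma and makes it transparent why the count depends on $\nu$ only through $s$, $k$, $l$; the paper's route is more computational but entirely self-contained elementary algebra. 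One small point worth writing out explicitly in your backward direction: besides ``the path cannot drop below $k+i-1$ on $(q_i,q_{i+1}]$'', you also need that it cannot \emph{equal} $k+i-1$ there, which is exactly the definition of $q_i$ as the last visit; together these upgrade the block partial sums from $\ge 0$ to $\ge 1$ and certify that each cut block is the canonical cycle-lemma rotation.
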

We give two examples at the end of sec.~\ref{sec:conjecture} and a proof of this theorem in sec.~\ref{sec:combinatorial-proof}. This theorem is a
main ingredient of the proof of the symmetry:

\begin{theorem} \label{thm:symmetry-differentials} All differentials
	$\omega^{(0)}_n(z_1,....,z_n)$ defined by \eqref{om02} and
	\eqref{eq:flip-om} are symmetric in their arguments $z_i$.
\end{theorem}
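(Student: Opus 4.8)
The natural approach is induction on the number of arguments $n$. For $n=2$ the claim is the observation already recorded in the introduction: from $B(w,\iota z)=B(\iota w,z)$ and $B(w,z)=B(\iota w,\iota z)$ one gets $\omega^{(0)}_2(w,z)=\omega^{(0)}_2(z,w)$. So fix $n\geq 3$, write $I=\{u_1,\dots,u_{n-1}\}$, and assume $\omega^{(0)}_k$ is symmetric for all $k<n$.

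Symmetry of $\omega^{(0)}_n(z,I)$ in the arguments of $I$ is the mild half. Every block $I_j$ appearing in \eqref{eq:flip-om} satisfies $|I_j|+1\le n-1$, so each factor $\omega^{(0)}_{|I_j|+1}(q,I_j)$ is symmetric by the induction hypothesis, and the sum over unordered set partitions $I_1\sqcup\dots\sqcup I_s=I$ is invariant under permutations of the elements of $I$; hence the $\iota$-even combination $\omega^{(0)}_n(z,I)+\omega^{(0)}_n(\iota z,I)$ is symmetric in $u_1,\dots,u_{n-1}$. The construction of \cite{Hock:2021tbl} controls the $z$-poles of $\omega^{(0)}_n(\,\cdot\,,I)$ tightly enough — via the linear loop equation together with the hypotheses that $\iota$ neither fixes nor permutes the ramification points $\beta_i$ of $x$ and that the $\iota\beta_i$ avoid the poles of $x$ — that $\omega^{(0)}_n(z,I)$ is recovered from its $\iota$-even part by a fixed linear operator $z\mapsto\sum_i\Res_{q=\beta_i}K(z,q)[\,\cdot\,]$ built from $\omega^{(0)}_2$ and $y$. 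Applying this operator to the expression above shows $\omega^{(0)}_n(z,I)$ is symmetric in $u_1,\dots,u_{n-1}$. It remains to prove the single transposition $\omega^{(0)}_n(z,\{u_1\}\cup J)=\omega^{(0)}_n(u_1,\{z\}\cup J)$ with $J=\{u_2,\dots,u_{n-1}\}$.

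For this I would iterate the reconstruction operator together with \eqref{eq:flip-om} all the way down to the base case $\omega^{(0)}_2$, producing for each of the two sides an explicit sum over decorated trees: internal vertices carry the residue operations coming from the factors $(y(z)-y(q))^{-s}$ and from the kernel $K$, the leaves carry $\omega^{(0)}_2$'s, and each tree is weighted by a rational number assembled from the symmetry factors $\tfrac1s$ and the factorials $\tfrac1{(s-1)!}$ that appear whenever an order-$s$ pole is hit by a residue. The transposition $z\leftrightarrow u_1$ reorganizes these trees along the edge separating $z$ from $u_1$: one tree on the $z$-side matches a whole family of trees on the $u_1$-side, obtained by splitting and regrouping the blocks that sit between $z$ and $u_1$. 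Once this reorganization is carried out, equality of the two tree sums reduces — locally, vertex by vertex along that edge — precisely to \eqref{eq:Conjecture-Partitions}: the integer being partitioned is a total Taylor order at the vertex, the partition $\nu$ records how the derivatives are distributed among the adjacent blocks before regrouping, the multinomial coefficient of partitions $\binom{\nu}{\mu,\rho_1,\dots,\rho_l}$ counts the admissible regroupings, the integers $(s,k,l)$ encode the local valences, and the identity $s!=\sum\cdots(r-l)!(p_1-1)!\cdots(p_l-1)!$ is exactly the ensuing matching of weights. Theorem~\ref{conj:main} then closes the induction.

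The step I expect to be the main obstacle is this last one: one must set up the tree expansion so that the full dependence on $u_1$ (respectively $z$) is localized on a single edge, and then verify that summing the weights of all trees that map to a fixed target tree is governed by \eqref{eq:Conjecture-Partitions} with the correct parameters and the correct $\nu$ — in particular that the residue manipulations produce no spurious boundary contributions and that no poles beyond those at the $\beta_i$ interfere. The combinatorial identity of Theorem~\ref{conj:main}, proved separately in sec.~\ref{sec:combinatorial-proof}, is the other substantial ingredient; the rest is the formal residue calculus on $\mathbb{P}^1$ together with the induction hypothesis.
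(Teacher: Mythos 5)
Your skeleton agrees with the paper — induction on $n$, symmetry in all arguments but the first being the easy half (it follows from the residue formula \eqref{sol:omega}, which treats the elements of $I$ symmetrically), the single transposition involving the first argument being the real issue, and Theorem~\ref{conj:main} being the decisive combinatorial input — but the step that carries all the weight is missing. Your proposed mechanism (expand both sides into decorated trees all the way down to $\omega^{(0)}_2$, then match tree sums "vertex by vertex" along the edge separating $z$ from $u_1$) is never constructed: no such closed tree expansion of $\omega^{(0)}_n$ is established, the asserted weights are guessed, and no argument is given that the transposition reorganizes the trees in the claimed way. More importantly, the picture misplaces where the difficulty sits. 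The paper proves the transposition by showing that $\omega^{(0)}_{|I|+2}(z_1,z_2,I)-\omega^{(0)}_{|I|+2}(z_2,z_1,I)$ is holomorphic in $z_1$ at each potential pole, namely $z_1=\iota z_2$, $z_1=\beta_j$ and $z_1=\iota u_k$, and hence vanishes. The pole at $z_1=\beta_j$ is \emph{not} controlled by the combinatorial identity at all: it requires the linear and quadratic loop equations of \cite{Hock:2021tbl} and an Eynard--Orantin-type residue lemma, ingredients entirely absent from your outline. The pole at $z_1=\iota z_2$ needs the separate Lemma~\ref{lemma:resK} together with the residue-commutation identities. Only the pole at $z_1=\iota u_k$ reduces to Theorem~\ref{conj:main}, and that reduction goes through the expansion \eqref{involution-expand} of $\omega^{(0)}_{|I|+1}(u,I)/dx(u)$ in the coefficients $\nabla^n\omega^{(0)}$, i.e.\ a Taylor-type expansion in $y(\iota q)-y(u)$ at the point $\iota u$, not through a tree-edge regrouping. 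Your remark that one must check "no poles beyond those at the $\beta_i$ interfere" is exactly backwards: the decisive poles are at $\iota u_k$ and $\iota z_2$.

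A smaller inaccuracy: $\omega^{(0)}_n(z,I)$ is not recovered from its $\iota$-even part by the operator $\sum_i\Res_{q=\beta_i}K_i(z,q)[\,\cdot\,]$ alone; the reconstruction \eqref{sol:omega} also contains the $\tilde K$-terms with residues at $q=\iota u_k$, and it is precisely these terms that generate the hard pole structure. So while you have correctly identified the two pillars (induction plus Theorem~\ref{conj:main}), the analytic core of the proof — the three projection identities \eqref{Proj:12}, the loop-equation argument at the ramification points, and the $\nabla^n$-expansion that turns the $\iota u_k$-pole into the partition identity with the right parameters $(s,k,l)$ and $\nu$ — is not supplied by your proposal, and the tree-matching heuristic as stated would not substitute for it.
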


Theorem~\ref{thm:symmetry-differentials} is proved by reduction to Theorem~\ref{conj:main} in sec.~\ref{sec:conjecture}.



\section{Preliminaries}

In \cite{Hock:2021tbl} the first and the third named authors proved that the definition of $\omega^{(0)}_n$ via
(\ref{om02}) and (\ref{eq:flip-om}) implies (under mild assumptions) a representation
as sum over residues involving recursion kernels:
\begin{theorem}[\cite{Hock:2021tbl}]
\label{thm:flip}
Let $I=\{u_1,...,u_m\}$ of length $|I|:=m$. Assume that
$\omega^{(0)}_{|I|+1}(z,I)$ has poles at most in points where the rhs
of \eqref{eq:flip-om} has. Then there is a unique solution of
\eqref{om02} and \eqref{eq:flip-om} such that
$\omega^{(0)}_{|I|+1}(z,I)$ is for $|I| \geq 2$ holomorphic at any
$u_k$ and at any $\iota \beta_i$. This solution is recursively produced by
\begin{align}
\omega^{(0)}_{|I|+1}(z,I)
  &= \sum_{i}
\Res\displaylimits_{q= \beta_i}K_i(z,q)
\sum_{\substack{I_1\sqcup I_2=I\\ I_1,I_2\neq \emptyset}}
\omega^{(0)}_{|I_1|+1}(q,I_1)\omega^{(0)}_{|I_2|+1}(\sigma_i(q),I_2)
  \label{sol:omega}
  \\[-1ex]
  &-\sum_{k=1}^m d_{u_k}
 \Big[\Res\displaylimits_{q= \iota u_k}
\sum_{\substack{I_1\sqcup I_2=I\\ I_1,I_2\neq \emptyset}}
\tilde{K}(z,q,u_k)
d_{u_k}^{-1}\big( \omega^{(0)}_{|I_1|+1}(q,I_1)
\omega^{(0)}_{|I_2|+1}(q,I_2)\big)\Big]\,.
\nonumber
\end{align}
Here
$\sigma_i\neq \mathrm{id}$ denotes the local Galois involution
in the vicinity of $\beta_i$, i.e.\ $x(\sigma_i(z))=x(z)$,
$\lim_{z\to \beta_i}\sigma_i(z)=\beta_i$.
By
$d_{u_k}$ we denote the exterior differential in $u_k$, which on 1-forms
has a right inverse
given by the primitive $d^{-1}_u \omega(u)=\int_{u'=\infty}^{u'=u}\omega(u')
$.
The recursion kernels are given by
\begin{align}
K_i(z,q)&:=   \frac{\frac{1}{2} (\frac{dz}{z-q}-\frac{dz}{z-\sigma_i(q)})
}{dx(\sigma_i(q))(y(q)-y(\sigma_i(q)))}\;,\qquad
\nonumber
\\
\tilde{K}(z,q,u)&:=
\frac{\frac{1}{2}\big(\frac{d(\iota z)}{\iota z-\iota q}-\frac{d(\iota z)}{\
\iota z- u}\big)}{
  dx(q)(y(q)-y(\iota u))}\;.
\label{eq:kernel}
\end{align}
The solution \eqref{sol:omega}$+$\eqref{eq:kernel} implies symmetry of
\eqref{eq:flip-om} under $z\mapsto \iota z$.

\end{theorem}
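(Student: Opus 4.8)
The plan is to argue by induction on $m=|I|$, with \eqref{om02} as the base case $|I|=1$ and the recursion \eqref{sol:omega} to be established for $|I|\geq 2$. For $|I|=1$ the sum on the right of \eqref{eq:flip-om} is empty, so one only has to see that $\omega^{(0)}_{2}(z,u)+\omega^{(0)}_{2}(\iota z,u)=0$; this is immediate from \eqref{om02} and the M\"obius invariance $B(\iota w,\iota z)=B(w,z)$, which also yields $B(\iota w,z)=B(w,\iota z)$. Since $\omega^{(0)}_{2}(z,u)$ has poles in $z$ only at $z=u$ and $z=\iota u$, with residues $\pm\,du$, it is the unique $\iota$-anti-invariant $1$-form with those principal parts, which settles the base case.

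For the inductive step fix $|I|=m\geq 2$ and assume $\omega^{(0)}_{k+1}$ is known for $k\leq m-1$ and obeys \eqref{sol:omega}, \eqref{eq:flip-om} and the holomorphy assertions. Because $s\geq 2$ and all $I_j\neq\emptyset$, every summand on the right of \eqref{eq:flip-om} for this $I$ involves only $\omega^{(0)}_{|I_j|+1}$ with $|I_j|\leq m-1$, so that right-hand side is a known meromorphic $1$-form in $z$; call it $\Phi(z;I)$. Any solution of \eqref{eq:flip-om} then satisfies $\omega^{(0)}_{m+1}(z,I)+\omega^{(0)}_{m+1}(\iota z,I)=\Phi(z;I)$, and uniqueness follows by a Liouville argument: the difference $g$ of two solutions is $\iota$-anti-invariant, so its poles occur in $\iota$-conjugate pairs of equal order; under the mild assumptions on the poles of $x$, of $y=-x\circ\iota$ and of the lower $\omega^{(0)}_{|I_j|+1}$, the only admissible poles of $g$ lie among the $\beta_i$ and the $\iota u_k$, hence also among the $\iota\beta_i$ and the $u_k$, at which $g$ is holomorphic, so $g$ has no poles and $g\equiv 0$. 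A subsidiary point to be checked is that these assumptions genuinely exclude every other candidate pole of $\Phi$.

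For existence one verifies directly that the $1$-form $\widetilde\omega(z,I)$ defined by the right-hand side of \eqref{sol:omega} obeys \eqref{eq:flip-om}; by uniqueness this simultaneously shows the solution exists, identifies it with \eqref{sol:omega}, and gives the concluding assertion of the theorem, namely that \eqref{sol:omega} satisfies the involution identity, i.e.\ is compatible with $z\mapsto\iota z$. Holomorphy of $\widetilde\omega(z,I)$ at the $u_k$ and at the $\iota\beta_i$ is visible from \eqref{sol:omega}: the $z$-dependence of the kernels \eqref{eq:kernel} enters only through the Cauchy kernels $\frac{dz}{z-q}-\frac{dz}{z-\sigma_i(q)}$, resp.\ $\frac{d(\iota z)}{\iota z-\iota q}-\frac{d(\iota z)}{\iota z-u}$, so that after $\Res_{q=\beta_i}$, resp.\ $\Res_{q=\iota u_k}$, the only poles in $z$ that can arise sit near $\beta_i$, resp.\ near $\iota u_k$, and so cannot meet any of the $u_k$ nor, since $\iota$ fixes no $\beta_i$, any of the $\iota\beta_i$; moreover the second sum in \eqref{sol:omega} is an exact $d_{u_k}$. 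To check \eqref{eq:flip-om} one forms $\widetilde\omega(z,I)+\widetilde\omega(\iota z,I)$: using the partial-fraction identity which writes $\frac{dz}{z-q}+\frac{d(\iota z)}{\iota z-q}$ as a combination of Cauchy kernels at $q$, at $\iota q$ and at the pole of $\iota$, one rewrites the $\iota$-symmetrised kernels, and then, as in the standard passage between topological recursion and loop equations, trades the residues at the $\beta_i$ and the $\iota u_k$ for a residue at $q=z$ via the vanishing of the total $q$-residue of a suitable global $1$-form, the remaining contributions cancelling under the inductive holomorphy and the pole assumptions. What is left is $\Res_{q=z}$ of an expression bilinear in the lower $\omega^{(0)}_{|I_j|+1}$, and unfolding this against the inductive formula \eqref{sol:omega} while resumming the weights $\tfrac1s$ attached to the powers $(y(z)-y(q))^{-s}$ reproduces the full composition sum on the right of \eqref{eq:flip-om}.

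The hard part is this last resummation. As $z\to\beta_i$ or $z\to\iota u_k$ the pole at $q=z$ inside $\Res_{q=z}$ collides with the poles of the inner $\omega^{(0)}_{|I_j|+1}(q,I_j)$, and isolating the resulting singular part is a delicate bookkeeping of residues which, once the lower recursions are unfolded, reduces to a combinatorial identity summing over the number of parts $s$ of the composition $I_1\sqcup\dots\sqcup I_s=I$ and over how the inner differentials distribute among the blocks. For the $z\mapsto\iota z$ symmetry this identity is still elementary, but it is exactly the mechanism which, once one also demands invariance under interchanging $z$ with a $u_k$ — that is, full symmetry in $z_1,\dots,z_n$ — turns into the non-trivial partition identity of Theorem~\ref{conj:main}.
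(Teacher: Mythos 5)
First, note that this paper does not prove Theorem~\ref{thm:flip} at all: it is imported verbatim from \cite{Hock:2021tbl}, so there is no in-paper proof to compare against; your proposal has to stand on its own. Its overall architecture (induction on $|I|$; base case from \eqref{om02} and M\"obius invariance of $B$; uniqueness via $\iota$-anti-invariance of the difference of two solutions plus the imposed holomorphy at the $u_k$ and $\iota\beta_i$, forcing a globally holomorphic $1$-form on $\mathbb{P}^1$, hence zero; existence by checking that the right-hand side of \eqref{sol:omega} satisfies \eqref{eq:flip-om}) is a sensible skeleton and is broadly consistent with how \cite{Hock:2021tbl} proceeds.

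The genuine gap is that the central step is asserted rather than proved. Equation \eqref{eq:flip-om} is \emph{multilinear}: its right-hand side is a sum over all $s$-part decompositions $I_1\sqcup\dots\sqcup I_s=I$, $2\le s\le|I|$, with weights $\tfrac1s$ and kernels $\frac{dy(z)\,dx(q)}{(y(z)-y(q))^s}$, whereas \eqref{sol:omega} is \emph{bilinear} with the two distinct kernels \eqref{eq:kernel}. Your sentence that one ``trades the residues at the $\beta_i$ and the $\iota u_k$ for a residue at $q=z$ \dots and unfolding this against the inductive formula while resumming the weights $\tfrac1s$ reproduces the full composition sum'' is precisely the substantive content of the theorem, and you give no mechanism for it. In particular, the intermediate claim that ``what is left is $\Res_{q=z}$ of an expression bilinear in the lower $\omega^{(0)}_{|I_j|+1}$'' cannot be the end of the story, since the target identity is not bilinear; passing from the bilinear recursion to the $s$-fold products requires the recursive re-expansion of the lower forms (the analogue of \eqref{involution-expand}, i.e.\ Lemma~2.2 of \cite{Hock:2021tbl}), identities of the type \eqref{AHP-pole-uz}, the linear and quadratic loop equations, and the residue-commutation bookkeeping of Facts~\ref{facts} --- none of which appears in the proposal, and an appeal to ``the standard passage between topological recursion and loop equations'' does not cover this setting, where the involution $\iota$, the second kernel $\tilde K$, and the $\tfrac1s$-weighted multilinear terms change the computation in an essential way. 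A secondary, smaller gap is that your uniqueness argument hinges on knowing that the poles in $z$ of the right-hand side of \eqref{eq:flip-om} (hence of any solution) are confined to the $\beta_i$, $\iota\beta_i$, $u_k$, $\iota u_k$; you flag this as ``a subsidiary point to be checked'' but do not check it, and without it the Liouville step is not complete.
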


We have slightly (but systematically) changed the notation. An
$\omega_{0,|I|+1}(I,q)$ in \cite{Hock:2021tbl} is in this paper called
$\omega^{(0)}_{|I|+1}(q,I)$. This change of order of the arguments is
merely notational and does not mean that $\omega^{(0)}_{|I|+1}$ is
symmetric. During the proof that (\ref{sol:omega}) follows from
(\ref{om02}) and (\ref{eq:flip-om}) --- under the given assumptions
--- a symmetry of the arguments of $\omega^{(0)}_{|I|+1}$ is never
used. The proof relies on well-known properties which are also
relevant for the present paper:
\begin{facts}
  \label{facts}
\begin{enumerate}
  \item Commutation of residues (not ramification points)
    \begin{align}
  \Res\displaylimits_{\tilde{q}=u} \Res\displaylimits_{q=u}
-\Res\displaylimits_{q=u}\Res\displaylimits_{\tilde{q}=u} 
=\Res\displaylimits_{q=u}\Res\displaylimits_{\tilde{q}=q} \;.
\label{commut-res-u}
\end{align}

\item Commutation of residues near ramification points:
\begin{align}
  \Res\displaylimits_{\tilde{q}= \beta_j}
  \Res\displaylimits_{q= \beta_j}
- \Res\displaylimits_{q= \beta_j}
  \Res\displaylimits_{ \tilde{q}= \beta_j}
  =  \Res\displaylimits_{q= \beta_j}
  \Big(\Res\displaylimits_{\tilde{q}= q}
 + \Res\displaylimits_{\tilde{q}= \sigma_j(q)} \Big)\;.
\label{commut-residues-beta-0}
\end{align}

\item Commutation of diffentials and residues; integration by parts:
  \begin{align}
    \Res\displaylimits_{q= u} d_u f(q,u,I) dq
    &=  d_u\Big[\Res\displaylimits_{q= u} f(q,u,I)dq\Big]\;,\qquad
\label{res-du}
    \\
    \Res\displaylimits_{q=  u} f(q,u,I') d_q g(q,u,I'')
    &=-   \Res\displaylimits_{q= u} g(q,u,I'') d_q f(q,u,I')\;.
    \nonumber
   \end{align}

 \item Invariance of residue under Galois conjugation:
   \begin{align}
     \label{inv-Galois}
  \Res\displaylimits_{q= \beta_j} f(q)dq
  = \Res\displaylimits_{q= \beta_j} f (\sigma_j(q)) d(\sigma(q))\;.
\end{align}

 \item Change of variables:
   \begin{align}
     \Res\displaylimits_{\iota z= \iota q} f(z,q)dz
     =\Res\displaylimits_{z= q} f(z,q)dz\;.
\label{change-iota}
   \end{align}  
   Indeed, the image of the circle $(\iota z)(t)=\iota q+re^{\mathrm{i}t}$
   is $z(t)=\iota (\iota q+re^{\mathrm{i}t})=q
   -\frac{(cq-a)^2 r e^{\mathrm{i}t}}{a^2+bc+cr (cq-a)e^{\mathrm{i}t}}$. For
   $r$ sufficiently small, this is a loop  about $q$.

 \item 
     For any holomorphic $f$, the difference $B(z,w)-B(f(z),f(w))$ is holomorphic at
     $z=w$.
     \label{Bergmman-f}
   
\end{enumerate}  
\end{facts}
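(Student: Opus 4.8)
The plan is to treat all six items as purely local assertions of one- and two-variable complex analysis: each residue $\Res_{q=a}$ is written as $\frac{1}{2\pi\mathrm{i}}\oint_{|q-a|=\epsilon}$ over a small positively oriented circle, and the only tools needed are contour deformation, change of variables, differentiation and integration by parts under the integral sign, and Taylor expansion. Since none of the statements uses global data, it suffices to argue in a fixed coordinate chart around the relevant point.

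For the commutation identities \eqref{commut-res-u} and \eqref{commut-residues-beta-0} I would adopt the nesting convention that in an iterated residue the inner variable runs on a strictly smaller circle than the outer one. Writing both orderings as genuine double contour integrals and exchanging the order of integration by Fubini, the two differ only in whether the $q$-contour encircles the additional pole produced by the diagonal: in $\Res_{\tilde q=u}\Res_{q=u}$ the $q$-contour encloses only $q=u$, while the rearranged $\Res_{q=u}\Res_{\tilde q=u}$ forces the $q$-contour to enclose $q=u$ together with $q=\tilde q$. The difference is therefore the residue collected at the diagonal, which after accounting for the elementary sign relation between the two diagonal residues $\Res_{q=\tilde q}$ and $\Res_{\tilde q=q}$ of a simple diagonal pole becomes exactly the right-hand side $\Res_{q=u}\Res_{\tilde q=q}$ of \eqref{commut-res-u}. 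For \eqref{commut-residues-beta-0} the same deformation is carried out near $\beta_j$, the only change being that the relevant integrands (built from $x$ and the kernels) become singular whenever the two $x$-preimages coincide, i.e.\ both at $\tilde q=q$ and at $\tilde q=\sigma_j(q)$; enlarging the contour then collects residues at both points, producing the sum $\Res_{\tilde q=q}+\Res_{\tilde q=\sigma_j(q)}$. I expect this to be the only genuinely delicate point, precisely because of the contour-nesting convention, the orientation bookkeeping, and the two-sheet contribution in the ramified case.

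The remaining items are routine once residues are fixed contour integrals. For the first line of \eqref{res-du} I would fix the circle $C=\{|q-u_0|=\epsilon\}$ small enough to enclose only the pole at $q=u$; for $u$ near $u_0$ this pole stays inside $C$, so no pole crosses $C$ as $u$ varies and one may differentiate under the (now $u$-independent) integral, giving $d_u\oint_C f\,dq=\oint_C d_uf\,dq$, which is the claim. The second line of \eqref{res-du} is the Leibniz rule $d_q(fg)=(d_qf)\,g+f\,d_qg$ together with $\Res_{q=u}d_q(fg)=0$, the residue of a total $q$-derivative vanishing because $\oint_C d(fg)=0$. Items \eqref{inv-Galois} and \eqref{change-iota} are changes of variable in the contour integral: both $\sigma_j$ and $\iota$ are holomorphic involutions, the former fixing $\beta_j$ and the latter exchanging $\iota q$ with $q$, so each maps a small positively oriented loop to a small positively oriented loop of the same winding number, leaving the integral unchanged; the explicit image-of-circle computation already displayed after \eqref{change-iota} confirms the orientation in that case.

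Finally, for the Bergman-kernel assertion (the last item) I would Taylor-expand
\begin{align*}
  B(f(z),f(w))=\frac{f'(z)\,f'(w)\,dz\,dw}{(f(z)-f(w))^2}
\end{align*}
about $z=w$ and compare with $B(z,w)=\frac{dz\,dw}{(z-w)^2}$. Setting $z=w+\delta$ and expanding numerator and denominator in powers of $\delta$, the coefficient of $\delta^{-2}$ agrees with that of $B(z,w)$ and the coefficient of $\delta^{-1}$ vanishes for both (by the symmetry of $B$ under $z\leftrightarrow w$), so in the difference the double pole and the simple pole cancel and the remainder is holomorphic at $z=w$ --- in fact proportional to the Schwarzian derivative of $f$, which is holomorphic wherever $f$ is.
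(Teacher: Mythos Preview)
Your sketches are correct. Note however that the paper does not actually prove this block: it is stated as \emph{Facts} (well-known background), with only item~(5) carrying the short justification already quoted in the statement. So there is no ``paper's own proof'' to compare against here; you have supplied standard arguments that the authors chose to omit.

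One small comment on your treatment of~\eqref{commut-res-u}: by deforming the $q$-contour you first obtain $-\Res_{\tilde q=u}\Res_{q=\tilde q}$ and then invoke a diagonal sign relation to reach $+\Res_{q=u}\Res_{\tilde q=q}$. That relation is not literally ``for a simple diagonal pole'' but rather follows because both expressions equal $I_1-I_2$; it holds for arbitrary pole order on the diagonal. A cleaner route is to deform the $\tilde q$-contour instead (fix $q$ on the small circle and enlarge the $\tilde q$-circle), which produces $\Res_{q=u}\Res_{\tilde q=q}$ directly with no sign bookkeeping. Your remark for item~(4) that holomorphic involutions preserve orientation (complex derivative nonzero implies positive real Jacobian) is the right justification; the fact that $\sigma_j'(\beta_j)=-1$ does not reverse orientation, since $|-1|^2=1>0$.
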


The residue formula (\ref{sol:omega}) shows that
$\omega^{(0)}_{|I|+2}(z_1,z_2,u_1,...,u_m)$ is symmetric in its last
$m+1$ arguments.  To show complete symmetry of $\omega^{(0)}_{|I|+2}$
it is thus enough to show
$\omega^{(0)}_{|I|+2}(z_1,z_2,I)=\omega^{(0)}_{|I|+2}(z_2,z_1,I)$.  We
know from the explicit formulae in \cite{Hock:2021tbl} that
$\omega^{(0)}_3$ is completely symmetric.  We introduce projection
operators
\begin{align}
\mathcal{P}_{z;a}  \omega(z,\dots) &:=
  \Res\displaylimits_{q=a} \frac{dz}{z-q} \omega(q,\dots)
\end{align}
to the principal part of the Laurent series of a 1-form
$z\mapsto \omega(z,...)$ at $z=a$. This projection vanishes iff
$ \omega(z,\dots)$ is holomorphic at $z=a$.  By previous results about
the set of poles of $\omega^{(0)}_{|I|+2}$ (and a short verification
below) it is enough to prove
\begin{align}
0&=  \mathcal{P}_{z_1; \iota z_2} (\omega^{(0)}_{|I|+2}(z_1,z_2,I)
  -\omega^{(0)}_{|I|+2}(z_2,z_1,I))\;,
  \nonumber
  \\
0&=  \mathcal{P}_{z_1;\beta_j} (\omega^{(0)}_{|I|+2}(z_1,z_2,I)
-\omega^{(0)}_{|I|+2}(z_2,z_1,I))\;,
  \nonumber
\\
0&=  \mathcal{P}_{z_1;\iota u_1} (\omega^{(0)}_{|I|+2}(z_1,z_2,I)
-\omega^{(0)}_{|I|+2}(z_2,z_1,I))\;.
\label{Proj:12}
\end{align}
We prove these equations by induction, from the hypothesis that 
all $\omega^{(0)}_{m'+2}$ with $m'<m$ are symmetric. 
We easily
convince ourselves that $\omega^{(0)}_{|I|+2}(z_1,z_2,I)
-\omega^{(0)}_{|I|+2}(z_2,z_1,I)$ is holomorphic at any other point.
For instance, holomorphicity of
$\omega^{(0)}_{|I|+3}(z_1,z_2,u,I)$ at $z_1=u$ is built into
Thm~\ref{thm:flip}.  In $\omega^{(0)}_{|I|+3}(z_2,z_1,u,I)$, the
recursion formula (\ref{sol:omega}) for $z\mapsto z_2$ and
$I\mapsto \{z_1,u\}\cup I$ gives on the rhs terms
$\omega^{(0)}_{|I'|+3}(q,z_1,u,I')$ (or $u,z_1$ in different factors, but
they are holomorphic) for some $I'\subset I$ with $|I'|<|I |$. Since 
$\mathcal{P}_{z_1,u}$ commutes with all residues in (\ref{sol:omega}),
and
$\omega^{(0)}_{|I'|+3}(q,z_1,u,I')= \omega^{(0)}_{|I'|+3}(z_1,u,q,I')$
by induction hypothesis, $\omega^{(0)}_{|I|+3}(z_2,z_1,u,I)$ is also
holomorphic at $z_1=u$. A similar discussion gives
\begin{align}
  \mathcal{P}_{z_1,\iota \beta_j} \omega^{(0)}_{|I|+2}(z_2,z_1,I)\equiv
  \Res\displaylimits_{\tilde{q}=\iota \beta_j} \frac{dz_1}{z_1-\tilde{q}}
  \omega^{(0)}_{|I|+2}(z_2,\tilde{q},I)=0\;,
\label{no-res-iotabeta}
\end{align}
and we have $\mathcal{P}_{z_1,\iota \beta_j} \omega^{(0)}_{|I|+2}(z_1,z_2,I)$ by
construction. The same arguments show that 
$\omega^{(0)}_{|I|+2}(z_1,z_2,I)
-\omega^{(0)}_{|I|+2}(z_2,z_1,I)$ is holomorphic at $z_1=w$ for any 
$w \notin \{\iota z_2,\iota u_k,\beta_j\}$. If we can prove 
the three identities (\ref{Proj:12}), then $\omega^{(0)}_{|I|+2}(z_1,z_2,I)
-\omega^{(0)}_{|I|+2}(z_2,z_1,I)$ is holomorphic on $\mathbb{P}^1$, hence
identically zero. The last relation  (\ref{Proj:12}) requires that Theorem~\ref{conj:main} is true.

\section{The pole at $z_1=  \iota z_2$}

The following was already available in \cite{Hock:2021tbl}, but we find it
necessary to repeat the proof to show that a symmetry argument was never used.
\begin{lemma}
For $I\neq \emptyset$ one has
\begin{align}
&\mathcal{P}_{z;\iota u} \omega^{(0)}_{|I|+2}(z,u,I)
    \label{AHP-pole-uz}
\\
&=\mathcal{P}_{z;\iota u} \Big(
- d_u\Big[\sum_{s=1}^{|I|} \sum_{\substack{I_1\sqcup...\sqcup I_s=I\\
    I_1,...,I_s\neq \emptyset}}
  \frac{dy(\iota z)}{(y(\iota z)-y(u))^{s+1}} \prod_{i=1}^s
    \frac{\omega^{(0)}_{|I_i|+1}(u,I_i)}{dx(u)}\Big]\Big)\;.
    \nonumber
\end{align}
\begin{proof}
We apply the projection to (\ref{eq:flip-om}) in which we set 
$q\mapsto \iota \tilde{q}$, $z\mapsto \iota q$ and $I \mapsto I\cup \{u\}$.  
Since
$\omega^{(0)}_{|I|+2}(\iota q,u,I)$ is holomorphic at $q=\iota u$, we get   
  \begin{align*}
&\mathcal{P}_{z;\iota u} \omega^{(0)}_{|I|+2}(z,u,I)   
\equiv \Res\displaylimits_{q=\iota u} \frac{\omega^{(0)}_{|I|+2}(q,u,I)dz}{z-q}
\\[-1ex]
&=\Res\displaylimits_{q=\iota u} \frac{dz}{z-q}
\sum_{s=2}^{|I|+1} \sum_{\substack{I_1\sqcup ...\sqcup I_s=I \cup \{u\}\\
    I_1,...,I_s\neq \emptyset}}
\frac{1}{s} \Res\displaylimits_{\iota \tilde{q} = \iota q}  \Big(
\frac{dy(\iota q) dx(\iota \tilde{q})}{(y(\iota q)-y(\iota \tilde{q}))^{s}}
\prod_{j=1}^s
\frac{\omega^{(0)}_{|I_j|+1}(\iota \tilde{q},I_j)}{dx(\iota \tilde{q})}
\Big)\;.
\end{align*}
By (\ref{change-iota}) we can change the inner residue to
$\Res\displaylimits_{\tilde{q}= q}$, which allows to use
(\ref{commut-res-u}) for $u\mapsto \iota u$.  There is no contribution
if $\Res_{q=\iota u}$ is the inner residue.  If
$\Res_{\tilde{q}=\iota u}$ is the inner residue, the only contribution
is the case that $u$ arises in a term
$\omega^{(0)}_2(\iota \tilde{q},u)$, which can occur in $s$
places. Moving it in front cancels the factor $s$:
\begin{align*}
  &\mathcal{P}_{z;\iota u} \omega^{(0)}_{|I|+2}(z,u,I)
\\[-1ex]
&=
-\Res\displaylimits_{q=\iota u} \Res\displaylimits_{\tilde{q}\to \iota u}
\frac{dz}{z-q}
\sum_{s=1}^{|I|} \sum_{\substack{I_1\sqcup ...\sqcup I_s=I\\
    I_1,...,I_s\neq \emptyset}}
\Big(\frac{dy(\iota q) \omega^{(0)}_2(\iota \tilde{q},u)}{
  (y(\iota q)-y(\iota \tilde{q}))^{s+1}}
\prod_{j=1}^s
\frac{\omega^{(0)}_{|I_j|+1}(\iota \tilde{q},I_j)}{dx(\iota \tilde{q})}
\Big)\;.
\end{align*}
With (\ref{om02}) the only contribution is from the restriction
$\omega^{(0)}_2(\iota \tilde{q},u) \mapsto B(\iota \tilde{q},u)
\equiv B(\tilde{q},\iota u)
=d_u(\frac{d\tilde{q}}{\tilde{q}-\iota u})$.
The differential $d_u$ is moved outside the residues, see (\ref{res-du}).
The evaluation of the
inner residue is now straightforward. The outer residue is
just the projection $\mathcal{P}_{z;\iota u}$, and the assertion follows.
\end{proof}
\end{lemma}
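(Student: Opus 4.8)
The plan is to derive the identity directly from the involution identity \eqref{eq:flip-om}, specialised by the substitutions $z\mapsto\iota q$, $q\mapsto\iota\tilde q$ and $I\mapsto I\cup\{u\}$, and then to extract the principal part at $z=\iota u$ by applying $\mathcal{P}_{z;\iota u}$. Under these substitutions the left-hand side of \eqref{eq:flip-om} becomes $\omega^{(0)}_{|I|+2}(\iota q,u,I)+\omega^{(0)}_{|I|+2}(q,u,I)$, using $\iota^2=\mathrm{id}$. The crucial first observation is that $\omega^{(0)}_{|I|+2}(\iota q,u,I)$ is holomorphic at $q=\iota u$: a pole there would force $\iota q=u$ to be a singularity of the first slot of $\omega^{(0)}_{|I|+2}(\,\cdot\,,u,I)$, but that point is regular by the holomorphicity at $u_k$ built into Theorem~\ref{thm:flip} (here $|I\cup\{u\}|\geq 2$ since $I\neq\emptyset$). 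Hence this term contributes nothing to $\mathcal{P}_{z;\iota u}$, and the whole principal part at $z=\iota u$ originates from the right-hand side of \eqref{eq:flip-om}.

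Next I would bring the right-hand side into a manageable double-residue form. The inner residue $\Res_{\iota\tilde q=\iota q}$ is rewritten as $\Res_{\tilde q=q}$ by the change of variables \eqref{change-iota}, so that, after inserting the outer $\Res_{q=\iota u}\frac{dz}{z-q}$, one faces the nested pair $\Res_{q=\iota u}\Res_{\tilde q=q}$. The commutation rule \eqref{commut-res-u} with $u\mapsto\iota u$ rewrites this as $\Res_{\tilde q=\iota u}\Res_{q=\iota u}-\Res_{q=\iota u}\Res_{\tilde q=\iota u}$. In the first term the remaining $q$-dependence sits only in $\frac{dz}{z-q}$, in $dy(\iota q)$ and in $(y(\iota q)-y(\iota\tilde q))^{-s}$, all regular at $q=\iota u$ for generic $z$; hence its inner residue vanishes and only the second term, carrying an overall minus sign, survives.

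It then remains to read off the surviving contribution. The pole at $\tilde q=\iota u$ can only be supplied by a factor $\omega^{(0)}_2(\iota\tilde q,u)$, i.e.\ by a part of the partition equal to the singleton $\{u\}$; every factor $\omega^{(0)}_{|I_j|+1}(\iota\tilde q,I_j)$ with $u\notin I_j$ is regular there. The singleton $\{u\}$ may occupy any of the $s$ slots, and each placement yields the same expression, so the $s$ choices cancel the symmetry factor $\tfrac1s$; peeling off the singleton leaves a partition of $I$ into the remaining parts. After relabelling, the summation index ranges over $s\in\{1,\dots,|I|\}$, the product runs over $s$ parts of $I$, and the exponent of $y(\iota q)-y(\iota\tilde q)$ becomes $s+1$, matching the claimed shape. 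Finally, \eqref{om02} restricts $\omega^{(0)}_2(\iota\tilde q,u)$ to the only contributing piece $B(\iota\tilde q,u)=B(\tilde q,\iota u)=d_u\tfrac{d\tilde q}{\tilde q-\iota u}$, the part $B(\iota\tilde q,\iota u)$ being regular at $\tilde q=\iota u$; the differential $d_u$ is pulled in front of both residues by \eqref{res-du}; the simple-pole residue $\Res_{\tilde q=\iota u}$ is evaluated, setting $\iota\tilde q=u$ in the surviving factors; and the outer $\Res_{q=\iota u}\frac{dz}{z-q}$ is recognised as $\mathcal{P}_{z;\iota u}$ acting on the $q$-dependent data with $q\mapsto z$ and hence $\iota q\mapsto\iota z$. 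This reproduces the asserted right-hand side, overall minus sign included.

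I expect the bookkeeping of the two middle steps to be the main obstacle. One has to keep precise track of which factor supplies the pole at each residue, verify that exactly the singleton-$\{u\}$ placements survive the inner residue while all other terms are holomorphic there, and confirm that the count of these placements cancels $\tfrac1s$ without remainder. The simultaneous shift $s\mapsto s+1$ in both the summation range and the exponent of $y(\iota z)-y(u)$ is easy to misplace; the delicate point is therefore to synchronise the relabelling of the partition with the evaluation of the inner residue, rather than any single hard computation.
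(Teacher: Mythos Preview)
Your proposal is correct and follows essentially the same route as the paper's proof: the same substitutions into \eqref{eq:flip-om}, the same use of \eqref{change-iota} and the commutation rule \eqref{commut-res-u}, the same identification of the surviving $\omega^{(0)}_2(\iota\tilde q,u)$ factor cancelling $\tfrac1s$, and the same final extraction of $d_u$ and recognition of the outer residue as $\mathcal{P}_{z;\iota u}$. Your write-up is slightly more explicit in justifying the two vanishing steps (holomorphicity of $\omega^{(0)}_{|I|+2}(\iota q,u,I)$ at $q=\iota u$ via Theorem~\ref{thm:flip}, and regularity of the $q$-integrand in the discarded term), but the argument is the same.
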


Whereas $\mathcal{P}_{z_1;\iota z_2} \omega^{(0)}_{|I|+2}(z_1,z_2,I)$ is easily
deduced from the previous Lemma, the other situation
\begin{align}
\mathcal{P}_{z_1, \iota z_2}\omega^{(0)}_{|I|+2}(z_2,z_1,I)
=
\Res\displaylimits_{\tilde{q}= \iota z_2}
\frac{dz_1}{z_1-\tilde{q}} \omega^{(0)}_{|I|+2}(z_2,\tilde{q},I)
\label{om0I-12-b}
\end{align}
is more involved. 
We insert the recursion formula (\ref{sol:omega}) for $z\mapsto z_2$ and
$I\mapsto \{\tilde{q}\} \cup I$. Only the part that
captures the pole at $z_2=\iota \tilde{q}$ matters:
\begin{align}
&\mathcal{P}_{z_1,\iota z_2}\omega^{(0)}_{|I|+2}(z_2,z_1,I)
\label{P12-omega21}
\\
&=\Res\displaylimits_{\tilde{q}= \iota z_2}
  \frac{dz_1}{z_1-\tilde{q}}
  \Big(-2 d_{\tilde{q}}\Big[\Res\displaylimits_{q= \iota \tilde{q}}
  \tilde{K}(z_2,q,\tilde{q}) \!\!
\sum_{\substack{I'\sqcup I''=I\\ I''\neq \emptyset}} \!\!
  d_{\tilde{q}}^{-1}\big(\omega^{(0)}_{|I'|+2}(q,\tilde{q},I')\big)
   \omega^{(0)}_{|I''|+1}(q,I'')\Big]\Big)\;.
   \nonumber
\end{align}
Because of shorter length, $\omega^{(0)}_{|I'|+2}(q,\tilde{q},I')=
\omega^{(0)}_{|I'|+2}(\tilde{q},q,I')$ by induction hypothesis. The following
identity will often be used:
\begin{lemma}
\label{lemma:resK}
  For $|I|\geq 1$ one has
\begin{align}  
&2\Res\displaylimits_{q=\iota \tilde{q}} \tilde{K}(z_2,q,\tilde{q})
\sum_{\substack{I'\sqcup I''=I\\ I''\neq \emptyset}}
d_{\tilde{q}}^{-1}\Big(\omega^{(0)}_{|I'|+2}(\tilde{q},q,I')
\omega^{(0)}_{|I''|+1}(q,I'')\Big)
\label{ResKomom}
\\[-1ex]
&=
\Res\displaylimits_{q= \iota \tilde{q}} \frac{dz_2dq}{(z_2-q)^2}
\sum_{s=1}^{|I|}\frac{1}{s}
\sum_{\substack{I_1\sqcup...\sqcup I_s=I\\ I_1,...,I_s\neq \emptyset}}
\prod_{i=1}^s 
\frac{\omega^{(0)}_{|I_i|+1}(q,I_i)}{dx(q)(y(\iota \tilde{q})-y(q))}\;.
\nonumber
\end{align}
\begin{proof}
  We distinguish $I'=\emptyset$, which is directly treated with
  (\ref{om02}), from $I'\neq \emptyset$.  Since
  $\tilde{K}(z_2,q,\tilde{q})$ is holomorphic at $q=\iota \tilde{q}$,
  we can replace $\omega^{(0)}_{|I'|+2}(\tilde{q},q,I')$ for
  $I'\neq \emptyset$ by
  $\mathcal{P}_{\tilde{q};\iota
    q}\omega^{(0)}_{|I'|+2}(\tilde{q},q,I')$ given by
  (\ref{AHP-pole-uz}) (we rename $z\mapsto \tilde{q}$ and
  $u\mapsto q$; the integration variable will be $r$):
\begin{align*}  
  \text{(\ref{ResKomom})}
  &=2\Res\displaylimits_{q=\iota \tilde{q}} \tilde{K}(z_2,q,\tilde{q})
\Big(
  \frac{dq}{q-\tilde{q}}
  -
  \frac{d (\iota q)}{\iota q-\tilde{q}}\Big)
\omega^{(0)}_{|I|+1}(q,I)
\\
&-2\Res\displaylimits_{q=\iota \tilde{q}} \tilde{K}(z_2,q,\tilde{q})
\sum_{\substack{I'\sqcup I''=I\\ I',I''\neq \emptyset}}
\omega^{(0)}_{|I''|+1}(q,I'')
\\*[-1ex]
& \times d_{\tilde{q}}^{-1}d_q
\Big[
\Res\displaylimits_{r=\iota q} \frac{d\tilde{q}}{\tilde{q}-r}
\sum_{s=1}^{|I|} \sum_{\substack{I_1\sqcup...\sqcup I_s=I'\\ I_1,...,I_s\neq \emptyset}}
  \frac{dy(\iota r)}{(y(\iota r)-y(q))^{s+1}} \prod_{i=1}^s
    \frac{\omega^{(0)}_{|I_i|+1}(q,I_i)}{dx(q)}\Big]\;.
\end{align*}
In the first line only $\frac{d (\iota q)}{\iota q-\tilde{q}}$ contributes.
One has
\begin{align}
  \frac{dy(\iota z)}{(y(\iota z)-y(u))^{s+1}}
   =\frac{(y\circ \iota)'(z) dz}{((y \circ \iota)(z)-y(u))^{s+1}}
  =-\frac{1}{s}d_z\Big(\frac{1}{(y(\iota z)-y(u))^{s}}\Big)\;,
  \label{AHP-pole-diff}
\end{align}
which is inserted in the last line above for $z\mapsto r$ and
$u\mapsto q$. We integrate by parts in $r$ and cancel $d_{\tilde{q}}^{-1}$.
Then we rename $r\mapsto \iota r$ and change thanks to
(\ref{change-iota}) the resulting $\Res_{\iota r=\iota q}$ to
$\Res_{r=q}$. We also expand the recursion kernel (\ref{eq:kernel}):
\begin{align*}  
  \text{(\ref{ResKomom})}
  &= \Res\displaylimits_{q=\iota \tilde{q}}
\frac{d(\iota z_2)d(\iota q)}{(\iota z_2-\iota q)(\iota z_2-\tilde{q})}
  \frac{\omega^{(0)}_{|I|+1}(q,I)}{dx(q)(y(\iota \tilde{q})-y(q))}
  \\
  &+\Res\displaylimits_{q=\iota \tilde{q}}\Big\{ \Big(
  \frac{d(\iota z_2)}{\iota z_2-\iota q}
-\frac{d(\iota z_2)}{\iota z_2-\tilde{q}}\Big)
\sum_{\substack{I'\sqcup I''=I\\ I',I''\neq \emptyset}}
\frac{\omega^{(0)}_{|I''|+1}(q,I'')}{dx(q)(y(\iota \tilde{q})-y(q))}
\\*[-1ex]
& \times 
\Res\displaylimits_{r= q} \frac{d(\iota r)}{\iota r-\tilde{q}}
\sum_{s=1}^{|I'|} \frac{1}{s} \sum_{\substack{I_1\sqcup...\sqcup I_s=I'\\
    I_1,...,I_s\neq \emptyset}}
d_q\Big[\frac{1}{(y(r)-y(q))^{s}} \prod_{i=1}^s
    \frac{\omega^{(0)}_{|I_i|+1}(q,I_i)}{dx(q)}\Big]\Big\}\;.
\end{align*}
We express the double residue in the last two lines as commutator
(\ref{commut-res-u}) for $\tilde{q}\mapsto r$ and
$u\mapsto \iota \tilde{q}$. There is no contribution if
$\Res\displaylimits_{q=\iota \tilde{q}} $ is the inner residue,
whereas the inner residue $\Res\displaylimits_{r=\iota \tilde{q}} $ is
straightforward to integrate. In the first line, because of a
first-order pole, we can replace
$\frac{d(\iota z_2)d(\iota q)}{(\iota z_2-\iota q)(\iota
  z_2-\tilde{q})} \mapsto \frac{d(\iota z_2)d(\iota q)}{(\iota
  z_2-\iota q)^2} \equiv \frac{dz_2dq}{(z_2-q)^2}$ without changing
the residue:
\begin{align*}  
  \text{(\ref{ResKomom})}
  &=
\Res\displaylimits_{q=\iota \tilde{q}}
\frac{dz_2dq}{(z_2-q)^2}
  \frac{\omega^{(0)}_{|I|+1}(q,I)}{dx(q)(y(\iota \tilde{q})-y(q))}
  \\
  &-\Res\displaylimits_{q=\iota \tilde{q}}\Big\{ 
\Big(\frac{d(\iota z_2)}{\iota z_2-\iota q}-
\frac{d(\iota z_2)}{\iota z_2-\tilde{q}}\Big)
\sum_{\substack{I'\sqcup I''=I\\ I',I''\neq \emptyset}}
\frac{\omega^{(0)}_{|I''|+1}(q,I'')}{dx(q)(y(\iota \tilde{q})-y(q))}
\tag{*}
\\*[-1ex]
& \times 
\sum_{s=1}^{|I'|} \frac{1}{s} \sum_{\substack{I_1\sqcup...\sqcup I_s=I'\\
    I_1,...,I_s\neq \emptyset}}
d_q\Big[\frac{1}{(y(\iota \tilde{q})-y(q))^{s}} \prod_{i=1}^s
    \frac{\omega^{(0)}_{|I_i|+1}(q,I_i)}{dx(q)}\Big]\Big\}\;.
    \tag{**}
  \end{align*}
  We convince ourselves that the factor
  $\frac{\omega^{(0)}_{|I''|+1}(q,I'')}{dx(q)(y(\iota
    \tilde{q})-y(q))}$ in the line (*) can be moved inside the
  differential $d_q$ of the last line (**) at expense of adjusting
  $\frac{1}{s}\mapsto \frac{1}{s+1}$. Consider an ordered partition
  $I=J_1\sqcup ...\sqcup J_{s+1}$ where $J_1<J_2<...<J_{s+1}$ in some
  lexicographic order. Let
  $A_J(q)=\frac{\omega^{(0)}_{|J|+1}(q,J)}{dx(q)(y(\iota
    \tilde{q})-y(q))}$.  Then the $A_{J_k}$ with $1\leq k\leq s+1$
  arise in the lines (*) and (**) as
\begin{align}
  \frac{s!}{s}  \sum_{k=1}^{s+1} A_{J_k}(q) d_q
  \Big(A_{J_1}(q) \stackrel{k}{\check{\cdots}}A_{J_{s+1}}(q)\Big)\;,
\label{A-order}
\end{align}
where the prefactor $\frac{1}{s}$ is the same as in the last line (**)
and the factor $s!$ takes into account the different distributions of
the ordered $J_l$ with $l\neq k$ to the unordered partition
$I_1\sqcup ...\sqcup I_s$. Every $dA_l$ arises $s$ times in
(\ref{A-order}), so
\begin{align*}
  \eqref{A-order}
=  s!  d_q\big(A_{J_1}(q) ...A_{J_{s+1}}(q)\big)\;.
\end{align*}
We write $s!=\frac{(s+1)!}{s+1}$ and reabsorb $(s+1)!$ into the sum
over unordered partitions. The factor $\frac{1}{s+1}$ remains as
claimed; $s+1$ is the total number of factors $A_{J_k}(q)$, and this
number is $s+1\geq 2$.  Shifting $s+1\mapsto s$, now with $s\geq 2$,
gives after integration by parts
\begin{align*}  
  \text{(\ref{ResKomom})}
  &=
\Res\displaylimits_{q=\iota \tilde{q}}
\frac{dz_2dq}{(z_2-q)^2}
  \frac{\omega^{(0)}_{|I|+1}(q,I)}{dx(q)(y(\iota \tilde{q})-y(q))}
  \\
  &+\Res\displaylimits_{q=\iota \tilde{q}} 
\frac{d(\iota z_2)d(\iota q)}{(\iota z_2-\iota q)^2}
\sum_{s=2}^{|I|} \frac{1}{s} \sum_{\substack{I_1\sqcup...\sqcup I_s=I'\\
    I_1,...,I_s\neq \emptyset}}
\prod_{i=1}^s
    \frac{\omega^{(0)}_{|I_i|+1}(q,I_i)}{dx(q)(y(\iota \tilde{q})-y(q))}\;.
  \end{align*}
  We have $B(\iota z_2,\iota q)=B(z_2,q)$, and then the first line
  completes this sum to $s=1$. This is the assertion.
\end{proof}
\end{lemma}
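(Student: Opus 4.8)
The plan is to establish \eqref{ResKomom} by splitting the sum on the left-hand side over the ordered splittings $I=I'\sqcup I''$ with $I''\neq\emptyset$ into the single term with $I'=\emptyset$ and the terms with $I'\neq\emptyset$, and showing that the former produces the $s=1$ summand and the latter the $s\geq2$ summands of the right-hand side.

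For $I'=\emptyset$ one has $\omega^{(0)}_2(\tilde q,q)=B(\tilde q,q)-B(\tilde q,\iota q)$ by \eqref{om02}; as $\omega^{(0)}_{|I|+1}(q,I)$ does not depend on $\tilde q$, the primitive $d_{\tilde q}^{-1}$ acts only on $\omega^{(0)}_2(\tilde q,q)$ and gives $\tfrac{dq}{q-\tilde q}-\tfrac{d(\iota q)}{\iota q-\tilde q}$. Since $\tilde K(z_2,q,\tilde q)$ is holomorphic at $q=\iota\tilde q$ (there the numerator vanishes together with $y(q)-y(\iota\tilde q)$) and $\omega^{(0)}_{|I|+1}(q,I)$ is holomorphic there too, only the $\tfrac{d(\iota q)}{\iota q-\tilde q}$ part has a pole at $q=\iota\tilde q$; evaluating that residue and using $B(\iota z_2,\iota q)=B(z_2,q)$ from Facts~\ref{facts} yields $\Res_{q=\iota\tilde q}\tfrac{dz_2\,dq}{(z_2-q)^2}\,\tfrac{\omega^{(0)}_{|I|+1}(q,I)}{dx(q)(y(\iota\tilde q)-y(q))}$, i.e.\ the $s=1$ term.

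For $I'\neq\emptyset$, using again that $\tilde K(z_2,q,\tilde q)$ is holomorphic at $q=\iota\tilde q$, one may replace $\omega^{(0)}_{|I'|+2}(\tilde q,q,I')$ inside the residue by its principal part $\mathcal{P}_{\tilde q;\iota q}\omega^{(0)}_{|I'|+2}(\tilde q,q,I')$, for which the preceding Lemma, via \eqref{AHP-pole-uz}, provides a $d_q$ of a residue in a fresh variable $r$ with integrand proportional to $\tfrac{dy(\iota r)}{(y(\iota r)-y(q))^{s+1}}$. One then rewrites that factor as $-\tfrac{1}{s}\,d_r$ of a power via \eqref{AHP-pole-diff}, integrates by parts in $r$ to expose a $\tfrac{d\tilde q\,dr}{(\tilde q-r)^2}$ on which the outer $d_{\tilde q}^{-1}$ is evaluated, changes variables $r\mapsto\iota r$ and uses \eqref{change-iota} to turn $\Res_{\iota r=\iota q}$ into $\Res_{r=q}$, and expands $\tilde K$ via \eqref{eq:kernel} (the factor $2$ cancels). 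The nested residue $\Res_{q=\iota\tilde q}\Res_{r=q}$ is then rewritten with \eqref{commut-res-u} for $\tilde q\mapsto r$, $u\mapsto\iota\tilde q$: the term with $\Res_{q=\iota\tilde q}$ innermost vanishes because $\tfrac{d(\iota z_2)}{\iota z_2-\iota q}-\tfrac{d(\iota z_2)}{\iota z_2-\tilde q}$ has a zero at $q=\iota\tilde q$ cancelling the only pole there, while in the term with $\Res_{r=\iota\tilde q}$ innermost that residue is taken immediately, replacing $y(r)$ by $y(\iota\tilde q)$. What survives is $-\Res_{q=\iota\tilde q}$ of $\big(\tfrac{d(\iota z_2)}{\iota z_2-\iota q}-\tfrac{d(\iota z_2)}{\iota z_2-\tilde q}\big)$ times $\sum_{I'\sqcup I''=I,\,I',I''\neq\emptyset}A_{I''}(q)\sum_{s=1}^{|I'|}\tfrac{1}{s}\sum_{I_1\sqcup\cdots\sqcup I_s=I'}d_q\!\big[\prod_i A_{I_i}(q)\big]$, where $A_J(q):=\tfrac{\omega^{(0)}_{|J|+1}(q,J)}{dx(q)(y(\iota\tilde q)-y(q))}$.

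The step I expect to be the main obstacle is the combinatorial reorganization of this last double sum, which I claim equals $\sum_{s=2}^{|I|}\tfrac{1}{s}\sum_{I_1\sqcup\cdots\sqcup I_s=I}d_q\!\big[\prod_i A_{I_i}(q)\big]$. To prove it, group the terms by the unordered collection $\{J_1,\dots,J_{s+1}\}$ of nonempty blocks partitioning $I$: choosing $J_k$ to be $I''$ and arranging the remaining $s$ blocks into the ordered partition $(I_1,\dots,I_s)$ of $I'$ in all $s!$ ways, this collection contributes $\tfrac{s!}{s}\sum_{k=1}^{s+1}A_{J_k}(q)\,d_q\big(A_{J_1}(q)\cdots\widehat{A_{J_k}(q)}\cdots A_{J_{s+1}}(q)\big)$; since every $d_qA_{J_l}$ occurs exactly $s$ times in that sum, it equals $s!\,d_q\big(A_{J_1}(q)\cdots A_{J_{s+1}}(q)\big)$. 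Writing $s!=\tfrac{(s+1)!}{s+1}$ and using that each such collection accounts for $(s+1)!$ ordered $(s+1)$-part partitions of $I$, the total contribution becomes $\tfrac{1}{s+1}\sum_{I_1\sqcup\cdots\sqcup I_{s+1}=I}d_q\!\big[\prod_i A_{I_i}(q)\big]$, and renaming $s+1\mapsto s$ gives the claim. A final integration by parts in $q$ moves the surviving $d_q$ onto $\tfrac{d(\iota z_2)}{\iota z_2-\iota q}-\tfrac{d(\iota z_2)}{\iota z_2-\tilde q}$, producing $\tfrac{d(\iota z_2)d(\iota q)}{(\iota z_2-\iota q)^2}=B(\iota z_2,\iota q)=B(z_2,q)$, so that this contribution together with the $s=1$ term from the $I'=\emptyset$ case assembles the right-hand side of \eqref{ResKomom}. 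Apart from this identity, the delicate part is the bookkeeping in the chain above --- checking at each residue commutation and kernel expansion that the discarded pieces are genuinely holomorphic at the point in question, and that the primitives $d^{-1}$ and $d_{\tilde q}^{-1}$ pick up nothing from the lower limit $\infty$.
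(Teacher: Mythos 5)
Your proposal is correct and follows essentially the same route as the paper's own proof: the same split into $I'=\emptyset$ versus $I'\neq\emptyset$, the replacement of $\omega^{(0)}_{|I'|+2}(\tilde q,q,I')$ by its principal part via \eqref{AHP-pole-uz}, the use of \eqref{AHP-pole-diff}, integration by parts, \eqref{change-iota} and the residue commutator \eqref{commut-res-u}, and the identical regrouping by unordered block collections that converts $\tfrac{1}{s}\mapsto\tfrac{1}{s+1}$ before the final integration by parts producing $B(\iota z_2,\iota q)=B(z_2,q)$. The only differences are presentational (you justify explicitly why the inner residue $\Res_{q=\iota\tilde q}$ drops out, and you apply $B(\iota z_2,\iota q)=B(z_2,q)$ already in the $I'=\emptyset$ term), so nothing essential is missing.
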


Equation (\ref{P12-omega21}) takes with Lemma \ref{lemma:resK} the form
\begin{align*}
&\mathcal{P}_{z_1,\iota z_2}\omega^{(0)}_{|I|+2}(z_2,z_1,I)
\\
&=-\Res\displaylimits_{\tilde{q}= \iota z_2}
  \frac{dz_1}{z_1-\tilde{q}}
  d_{\tilde{q}}\Big[
\Res\displaylimits_{q=\iota \tilde{q}} 
\frac{d z_2 dq}{(z_2-q)^2}
\sum_{s=1}^{|I|} \frac{1}{s} \sum_{\substack{I_1\sqcup...\sqcup I_s=I\\
    I_1,...,I_s\neq \emptyset}}
\prod_{i=1}^s
    \frac{\omega^{(0)}_{|I_i|+1}(q,I_i)}{dx(q)(y(\iota \tilde{q})-y(q))}
    \Big]
\\
&=d_{z_2} \Big[\Res\displaylimits_{\tilde{q}= \iota z_2}
  \Res\displaylimits_{q= \iota \tilde{q}}
 \frac{dz_1}{(z_1-\tilde{q})}
 \frac{d q}{(q-z_2)}
\sum_{s=1}^{|I|} \frac{dy(\iota \tilde{q})}{(y(\iota \tilde{q})-y(q))^{s+1}}
\sum_{\substack{I_1\sqcup...\sqcup I_s=I\\ I_1,...,I_s\neq \emptyset}} \!
\prod_{i=1}^s
    \frac{\omega^{(0)}_{|I_i|+1}(q,I_i)}{dx(q)}
    \Big]\,.  
\nonumber
\end{align*}
We rename $\tilde{q}\mapsto \iota \tilde{q}$, take (\ref{change-iota})
into account and write the residues as a commutator
(\ref{commut-res-u}) with $u\mapsto z_2$ and
$q\leftrightarrow \tilde{q}$. There is no contribution if
$\Res\displaylimits_{\tilde{q}= z_2}$ is the inner residue, whereas
the inner residue $\Res\displaylimits_{q= z_2}$ is immediately
evaluated:
\begin{align*}
&\mathcal{P}_{z_1, \iota z_2}\omega^{(0)}_{|I|+2}(z_2,z_1,I)
\\
&=-d_{z_2} \Big[
\Res\displaylimits_{\tilde{q}= z_2}
\frac{dz_1}{z_1-\tilde{q}}
\sum_{s=1}^{|I|} \frac{dy(\tilde{q})}{(y(\tilde{q})-y(z_2))^{s+1}}
\sum_{\substack{I_1\sqcup...\sqcup I_s=I\\ I_1,...,I_s\neq \emptyset}}
\prod_{i=1}^s
    \frac{\omega^{(0)}_{|I_i|+1}(z_2,I_i)}{dx(z_2)}
    \Big]\;.  
\nonumber
\end{align*}
Renaming $\tilde{q}\mapsto \iota \tilde{q}$ and changing the resulting
$\Res_{\iota \tilde{q}= \iota^2 z_2}$ to
$\Res_{\tilde{q}= \iota z_2}$ by (\ref{change-iota}) we get 
exactly $\mathcal{P}_{z_1,\iota z_2}\omega^{(0)}_{|I|+2}(z_1,z_2,I)$
when using  (\ref{AHP-pole-uz}) for $z\mapsto z_1$ and $u\mapsto z_2$.
In other words, $\omega^{(0)}_{|I|+2}(z_1,z_2,I)-\omega^{(0)}_{|I|+2}(z_2,z_1,I)$
is holomorphic at $z_1=\iota z_2$.

\section{The pole at $z_1=\beta_j$}

The projection to the principal part of the Laurent series of
$\omega^{(0)}_{|I|+2}(z_1,z_2,I)$ at $z_1=\beta_j$, where $\beta_j$ is
a ramification point $dx(\beta_j)=0$, is given by the case $i=j$ of
the first line of the residue formula (\ref{sol:omega}). Taking
invariance (\ref{inv-Galois}) under Galois conjugation
and $K_j(z,q)=K_j(z,\sigma_j(q))$ into account,
this projection reads
\begin{align}
\mathcal{P}_{z_1;\beta_j} \omega^{(0)}_{|I|+2}(z_1,z_2,I)
  &=2\Res\displaylimits_{q=\beta_j} \;
  K_j(z_1,q)
  \Big( \omega^{(0)}_2(q,z_2)  \omega^{(0)}_{|I|+1}(\sigma_j(q),I)
  \nonumber
  \\[-1ex]
  &\qquad\qquad + \sum_{\substack{I_1\sqcup I_2=I\\ I_1,I_2\neq \emptyset}}
  \omega^{(0)}_{|I_1|+2}(q,z_2,I_1)  \omega^{(0)}_{|I_2|+1}(\sigma_j(q),I_2)
\Big)\;.
\label{om0I-beta-target}
\end{align}
More involved is 
$\mathcal{P}_{z_1;\beta_j} \omega^{(0)}_{|I|+2}(z_2,z_1,I)
=\Res_{\tilde{q}=\beta_j}\frac{\omega^{(0)}_{|I|+2}(z_2,\tilde{q},I)dz_1}{
  z_1-\tilde{q}}$. We represent
$\omega^{(0)}_{|I|+2}(z_2,\tilde{q},I)$ by the recursion formula
(\ref{sol:omega}) for $z\mapsto z_2$ and
$I\mapsto \{\tilde{q}\} \cup I$ and divide its projection into
$ \mathcal{P}_{z_1:\beta_j} \omega^{(0)}_{|I|+2}(z_2,z_1,I)
=(\mathcal{P}_{z_1;\beta_j} \omega^{(0)}_{|I|+2}(z_2,z_1,I))_{(a)}+
(\mathcal{P}_{z_1;\beta_j} \omega^{(0)}_{|I|+2}(z_2,z_1,I))_{(b)} $
with
\begin{align}
  &(\mathcal{P}_{z_1;\beta_j} \omega^{(0)}_{|I|+2}(z_2,z_1,I))_{(a)}
\label{om0I-beta-a}
\\
&=2\Res\displaylimits_{\tilde{q}=\beta_j}
\frac{dz_1}{z_1-\tilde{q}}
\sum_{i\neq j} \Res\displaylimits_{q=\beta_i} \;
  K_i(z_2,q)
\sum_{\substack{I_1\sqcup I_2=I\\ I_2\neq \emptyset}}
 \omega^{(0)}_{|I_1|+2}(q,\tilde{q},I_1)  \omega^{(0)}_{|I_2|+1}(\sigma_i(q),I_2)
\nonumber
\\
&-2\sum_{k=1}^{|I|}\Res\displaylimits_{\tilde{q}=\beta_j}
\frac{dz_1}{z_1-\tilde{q}}
d_{u_k} \Big[\Res\displaylimits_{q= \iota u_k} \tilde{K}(z_2,q,u_k)
d_{u_k}^{-1}
\Big(
\sum_{\substack{I_1\sqcup I_2=I\\ I_2\neq \emptyset}} \!\!\!
 \omega^{(0)}_{|I_1|+2}(q,\tilde{q},I_1) \omega^{(0)}_{|I_2|+1}(q,I_2)
  \Big)\Big]
\nonumber
\end{align}
and
\begin{align}
&(\mathcal{P}_{z_1;\beta_j} \omega^{(0)}_{|I|+2}(z_2,z_1,I))_{(b)}
\label{om0I-1beta-b}
\\
&=\Res\displaylimits_{\tilde{q}=\beta_j}
\frac{dz_1}{z_1-\tilde{q}}\Big\{
2\Res\displaylimits_{q=\beta_j} \;
  K_j(z_2,q)
\sum_{\substack{I_1\sqcup I_2=I\\ I_2\neq \emptyset}}
  \omega^{(0)}_{|I_1|+2}(q,\tilde{q},I_1)
  \omega^{(0)}_{|I_2|+1}(\sigma_j(q),I_2)
\nonumber
\\
&+\Res\displaylimits_{\tilde{q}=\beta_j}
\frac{dz_1}{z_1-\tilde{q}}
\Res\displaylimits_{q=\iota \tilde{q}} \frac{dz_2}{z_2-q}
\omega^{(0)}_{|I|+2}(q,\tilde{q},I)\;.
\nonumber
\end{align}
In the very last line we have not expanded the projection to the pole
at $q=\iota \tilde{q}$; this form will be more convenient.  In
$ (\mathcal{P}_{z_1;\beta_j} \omega^{(0)}_{|I|+2}(z_2,z_1,I))_{(a)}$
the two residues do not interfere with each other; they commute. In
the now inner integral over $\tilde{q}$,
$\omega^{(0)}_2(q,\tilde{q},I_1)$ is holomorphic at
$\tilde{q}=\beta_j$ for $I_1=\emptyset$ so that the sums restrict to
$I_1,I_2 \neq \emptyset$. Because of shorter length, the induction
hypothesis gives
$\omega^{(0)}_{|I_1|+2}(q,\tilde{q},I_1)=
\omega^{(0)}_{|I_1|+2}(\tilde{q},q,I_1)$.  Now the residue
$\Res_{\tilde{q}=\beta_j}$ can be expressed by the corresponding part
of the recursion formula (\ref{sol:omega}) for
$\mathcal{P}_{z_1;\beta_j}\omega^{(0)}_{|I_1|+2}(z_1,q,I_1)$.  The
residues commute again, and after relabelling the index sets we get
\begin{align*}
  &(\mathcal{P}_{z_1;\beta_j} \omega^{(0)}_{|I|+2}(z_2,z_1,I))_{(a)}
\\
&=2 \Res\displaylimits_{\tilde{q}=\beta_j}K_j(z_1,\tilde{q})
\bigg\{
2\sum_{i\neq j}  
 \Res\displaylimits_{q=\beta_i} \;
  K_i(z_2,q) 
  \nonumber
  \\[-1ex]
  &\qquad\qquad \times 
\sum_{\substack{I_1\sqcup I_2 \sqcup I_3 =I\\ I_2,I_3 \neq \emptyset}}
\omega^{(0)}_{|I_1|+2}(\tilde{q},q,I_1)
\omega^{(0)}_{|I_2|+1}(\sigma_i(q),I_2)
\omega^{(0)}_{|I_3|+1}(\sigma_j(\tilde{q}),I_3)  
\nonumber
\\
&-2\sum_{k=1}^{|I|}
d_{u_k} \Big[
\Res\displaylimits_{q=\iota u_k} \tilde{K}(z_2,q,u_k)
\nonumber
  \\[-0.5ex]
  &\qquad\qquad \times 
\sum_{\substack{I_1\sqcup I_2\sqcup I_3=I\\ I_2,I_3\neq \emptyset}}
d_{u_k}^{-1} \Big(\omega^{(0)}_{|I_1|+3}(\tilde{q},q,I_1)
\omega^{(0)}_{|I_2|+1}(q,I_2)
\omega^{(0)}_{|I_3|+1}(\sigma_j(\tilde{q}),I_3)
\Big)\Big]
\bigg\}\;.
\end{align*}
Next, we rearrange the decompositions $I_1\sqcup I_2\sqcup I_3=I$ into
a first summation over $(I\setminus I_3) \sqcup I_3$ with
$I_3\neq \emptyset$ and then another split of $(I\setminus I_3)$. In
the last line we have $u_k \in I\setminus I_3$; otherwise there is no
pole at $q=\iota u_k$.  Since $I_3\neq \emptyset$, all
$\omega^{(0)}_{|I_1|+2}(\tilde{q},q,I_1)$ in the above equation are
symmetric by induction hypothesis, and we can change them to
$\omega^{(0)}_{|I_1|+2}(q,\tilde{q},I_1)$.  We then notice that the
terms in $\{~\}$ above, with all
$\omega^{(0)}_{|I_3|+1}(\sigma_j(\tilde{q}),I_3)$ taken out, almost
assemble to the recursion formula (\ref{sol:omega}) for
$\omega^{(0)}_{|I\setminus I_3|+2}(z_2,\tilde{q},I\setminus I_3)$;
with two terms missing: the residue at $q=\beta_j$ and the residue at
$q=\iota \tilde{q}$:
\begin{align}
  &(\mathcal{P}_{z_1;\beta_j} \omega^{(0)}_{|I|+2}(z_2,z_1,I))_{(a)}
\label{om0I-beta-a1}
\\
&=2 \Res\displaylimits_{\tilde{q}=\beta_j}K_j(z_1,\tilde{q})
\sum_{\substack{I_3\subset I \\ \emptyset \neq I_3 \neq I}}
\omega^{(0)}_{|I\setminus I_3|+2}(z_2,\tilde{q},I\setminus I_3) 
\omega^{(0)}_{|I_3|+1}(\sigma_j(\tilde{q}),I_3)
\tag{*}
\\
&
-4 \Res\displaylimits_{\tilde{q}=\beta_j}
 \Res\displaylimits_{q=\beta_j} 
K_j(z_1,\tilde{q})  K_j(z_2,q) 
  \nonumber
  \\[-1ex]
  &\qquad\qquad \times 
\sum_{\substack{I_1\sqcup I_2 \sqcup I_3 =I\\ I_2,I_3 \neq \emptyset}}
\omega^{(0)}_{|I_1|+2}(\tilde{q},q,I_1)
\omega^{(0)}_{|I_2|+1}(\sigma_j(q),I_2)
\omega^{(0)}_{|I_3|+1}(\sigma_j(\tilde{q}),I_3)  
\nonumber
\\
&- 2 \Res\displaylimits_{\tilde{q}=\beta_j}K_j(z_1,\tilde{q})
\sum_{\substack{I_3\subset I \\ \emptyset \neq I_3 \neq I}}
\omega^{(0)}_{|I_3|+1}(\sigma_j(\tilde{q}),I_3)
\Res\displaylimits_{q= \iota \tilde{q}} \frac{dz_2}{z_2-q}
\omega^{(0)}_{|I\setminus I_3|+2}(q,\tilde{q},I\setminus I_3) \;.
\nonumber
\end{align}
In the second line (*) of (\ref{om0I-beta-a1}) we have
$\omega^{(0)}_{|I\setminus I_3|+2}(z_2,\tilde{q},I\setminus I_3)
=\omega^{(0)}_{|I\setminus I_3|+2}(\tilde{q},z_2,I\setminus I_3)$ by
induction hypothesis. Now the line (*) almost coincides with the
projection $\mathcal{P}_{z_1;\beta_j} \omega^{(0)}_{|I|+2}(z_1,z_2,I)$
given in (\ref{om0I-beta-target}); only the first term on the rhs,
corresponding to $I_3=I$ in (*), is missing:
\begin{align}
  \text{(\ref{om0I-beta-a1}*)}
  =\mathcal{P}_{z_1;\beta_j} \omega^{(0)}_{|I|+2}(z_1,z_2,I)
  - 2 \Res\displaylimits_{\tilde{q}=\beta_j}K_j(z_1,\tilde{q})
\omega^{(0)}_{2}(z_2,\tilde{q})
\omega^{(0)}_{|I|+1}(\sigma_j(\tilde{q}),I)\;.
\label{om0I-beta-a2}
\end{align}

We continue with (\ref{om0I-1beta-b}). In the middle line we commute the 
residues with (\ref{commut-residues-beta-0}). Here 
$\omega^{(0)}_2(q,\tilde{q})$, i.e.\ the case $I_1=\emptyset$,
only contributes to the inner residue
$\Res\displaylimits_{\tilde{q}= q}$. For $I_1\neq \emptyset$ we have
  $\omega^{(0)}_{|I_1|+2}(q,\tilde{q},I_1)=\omega^{(0)}_{|I_1|+2}(\tilde{q},q,I_1)
  $ by induction hypothesis, so that this term
  only contributes to the inner residue
  $\Res\displaylimits_{\tilde{q}= \beta_j}$:
\begin{align}
&(\mathcal{P}_{z_1;\beta_j} \omega^{(0)}_{|I|+2}(z_2,z_1,I))_{(b)}
-\Res\displaylimits_{\tilde{q}=\beta_j}
\frac{dz_1}{z_1-\tilde{q}}
\Res\displaylimits_{q=\iota \tilde{q}} \frac{dz_2}{z_2-q}
\omega^{(0)}_{|I|+2}(q,\tilde{q},I)
 \label{om0I-1beta-b2}
\\
&=2
\Res\displaylimits_{q=\beta_j} 
K_j(z_2,q)
\omega^{(0)}_{|I|+1}(\sigma_j(q),I)
\Res\displaylimits_{\tilde{q}=q}
\frac{dz_1}{z_1-\tilde{q}}
\omega^{(0)}_2(q,\tilde{q})  
\nonumber
\\[-0.5ex]
  &+
 2 \Res\displaylimits_{q=\beta_j} 
  K_j(z_2,q)
\sum_{\substack{I_1\sqcup I_2=I\\ I_1,I_2\neq \emptyset}}
  \omega^{(0)}_{|I_2|+1}(\sigma_j(q),I_2)
\Res\displaylimits_{\tilde{q}=\beta_j}
\frac{dz_1}{z_1-\tilde{q}}
  \omega^{(0)}_{|I_1|+2}(\tilde{q},q,I_1)
\nonumber
\\
&=2
\Res\displaylimits_{q=\beta_j} 
K_j(z_2,q) 
\frac{dz_1dq}{(z_1-q)^2}
\omega^{(0)}_{|I|+1}(\sigma_j(q),I)
\nonumber
  \\
  &+
 4 \Res\displaylimits_{q=\beta_j} \Res\displaylimits_{\tilde{q}=\beta_j}
  K_j(z_2,q)K_j(z_1,\tilde{q})
  \nonumber
  \\
  &\qquad\qquad \times
  \sum_{\substack{I_1\sqcup I_2\sqcup I_3=I\\ I_2,I_3\neq \emptyset}}
  \omega^{(0)}_{|I_1|+2}(\tilde{q},q,I_1)
    \omega^{(0)}_{|I_2|+1}(\sigma_j(\tilde{q}),I_2)
  \omega^{(0)}_{|I_3|+1}(\sigma_j(q),I_3)\;.
\nonumber
\end{align}
The last two lines were obtained by inserting the case $i=j$ of the
first line of the recursion formula (\ref{sol:omega}) which provides the
projection to the principal part of the Laurent series at
$z_1=\beta_j$. We also relabelled the partition of $I$. We combine
(\ref{om0I-beta-a1}), (\ref{om0I-beta-a2}) and 
(\ref{om0I-1beta-b2}) to
\begin{align}
  &\mathcal{P}_{z_1;\beta_j} \omega^{(0)}_{I|+2}(z_2,z_1,I)
  -\mathcal{P}_{z_1;\beta_j} \omega^{(0)}_{|I|+2}(z_1,z_2,I)
  \label{om0I-1beta-diff}
  \\
&  = \Delta_j(z_1,z_2,I)
  - 2 \Res\displaylimits_{\tilde{q}=\beta_j}K_j(z_1,\tilde{q})
\omega^{(0)}_{2}(z_2,\tilde{q})
\omega^{(0)}_{|I|+1}(\sigma_j(\tilde{q}),I)
\tag{**}
\\
&- 2 \Res\displaylimits_{\tilde{q}=\beta_j}K_j(z_1,\tilde{q})
\sum_{\substack{I_3\subset I \\ \emptyset \neq I_3 \neq I}}
\omega^{(0)}_{|I_3|+1}(\sigma_j(\tilde{q}),I_3)
\Res\displaylimits_{q=\iota \tilde{q}} \frac{dz_2}{z_2-q}
\omega^{(0)}_{|I\setminus I_3|+2}(q,\tilde{q},I\setminus I_3) 
\nonumber
\\
&+2
\Res\displaylimits_{q=\beta_j} 
K_j(z_2,q) 
\frac{dz_1dq}{(z_1-q)^2}
\omega^{(0)}_{|I|+1}(\sigma_j(q),I)
\tag{***}
  \\
&+\Res\displaylimits_{\tilde{q}=\beta_j}
\frac{dz_1}{z_1-\tilde{q}}
\Res\displaylimits_{q=\iota \tilde{q}} \frac{dz_2}{z_2-q}
\omega^{(0)}_{|I|+2}(q,\tilde{q},I)\;,
\nonumber
\end{align}
where
\begin{align*}
\Delta_j(z_1,z_2,I) &:=
4 \Big(\Res\displaylimits_{q=\beta_j}
\Res\displaylimits_{\tilde{q}=\beta_j}
-
\Res\displaylimits_{\tilde{q}=\beta_j}
 \Res\displaylimits_{q=\beta_j} 
\Big)
K_j(z_1,\tilde{q})  K_j(z_2,q) 
  \nonumber
  \\[-1ex]
  &\qquad\quad \times 
\sum_{\substack{I_1\sqcup I_2 \sqcup I_3 =I\\ I_2,I_3 \neq \emptyset}} \!\!\!
\omega^{(0)}_{|I_1|+2}(\tilde{q},q,I_1)
\omega^{(0)}_{|I_2|+1}(\sigma_j(q),I_2)
\omega^{(0)}_{|I_3|+1}(\sigma_j(\tilde{q}),I_3)  \;.
\end{align*}
The further treatment of $\Delta_j(z_1,z_2,I)$ parallels the proof of
\cite[Thm 4.6]{Eynard:2007kz}.  The commutation rule (\ref{commut-residues-beta-0}), with
$q\leftrightarrow \tilde{q}$, leads to an inner residue at
$\Res_{q=\tilde{q}}$ in $\Delta_j(z_1,z_2,I)$, which is only present if
$I_1=\emptyset$, and then only the Bergman kernel  $B(\tilde{q},q)$ of
$\omega^{(0)}_2(\tilde{q},q)$ in (\ref{om02})
contributes to the pole:
\begin{align*}
  &\Delta_j(z_1,z_2,I)
  \\
  &=
4
\Res\displaylimits_{\tilde{q}=\beta_j}
 \Res\displaylimits_{q=\tilde{q}} 
K_j(z_1,\tilde{q})  K_j(z_2,q) B(\tilde{q},q)
\sum_{\substack{I_2 \sqcup I_3 =I\\ I_2,I_3 \neq \emptyset}}
\omega^{(0)}_{|I_2|+1}(\sigma_j(q),I_2)
\omega^{(0)}_{|I_3|+1}(\sigma_j(\tilde{q}),I_3)  \;.
\end{align*}
By Facts \ref{facts}.(\ref{Bergmman-f}) we 
can replace $B(\tilde{q},q)\mapsto
\frac{1}{2}B(\tilde{q},q)+\frac{1}{2}B(\sigma_j(\tilde{q}),\sigma_j(q))$
without changing the inner residue. 
In the part of the integrand containing 
$\frac{1}{2}B(\sigma_j(\tilde{q}),\sigma_j(q))$ we can change,
by invariance (\ref{inv-Galois}) of the residue under Galois conjugation, 
$\tilde{q}\leftrightarrow \sigma_j(\tilde{q})$. We also rename 
$q\leftrightarrow \sigma_j(q)$. This brings the inner residue to 
$\Res_{\sigma_j(q)=\sigma_j(\tilde{q})}$. But $\sigma_j$ is locally an involution so that 
the property (\ref{change-iota}) also holds for $\iota\mapsto \sigma_j$.
These steps result with $K_j(z,\sigma_j(q))\equiv K_j(z,q)$
in
\begin{align*}
\Delta_j(z_1,z_2,I)
&=2\Res\displaylimits_{\tilde{q}=\beta_j} K_j(z_1,\tilde{q})
\Res\displaylimits_{q=\tilde{q}}
K_j(z_2,q)B(q, \tilde{q})
\Big(U(q,\tilde{q};I)+U(\sigma_j(q),\sigma_j(\tilde{q});I)\Big)
\nonumber
\\
&= 2
\Res\displaylimits_{\tilde{q}=\beta_j} K_j(z_1,\tilde{q})
d_{q}\Big(
K_j(z_2,q)
\Big(U(q,\tilde{q};I)
+U(\sigma_j(q),\sigma_j(\tilde{q});I)\Big)
\Big)\Big|_{q=\tilde{q}}\;,
\nonumber
\\
&\text{where}\quad
U(q,\tilde{q};I)
=\sum_{\substack{I_1\sqcup I_2=I\\I_1,I_2\neq \emptyset}}
\omega^{(0)}_{|I_1|+1}(q, I_1)\omega^{(0)}_{|I_2|+1}(\tilde{q},I_2)\;.
\nonumber
\end{align*}
We evaluate the $d_{q}$ differential and use
$d_{q}U(q,\tilde{q};I)\big|_{q=\tilde{q}}=
\frac{1}{2}d_{\tilde{q}}U(\tilde{q},\tilde{q};I)$.
Integration by parts of this term yields (renaming $\tilde{q}\mapsto q$)
\begin{align*}
\Delta_j(z_1,z_2,I) &= 
\Res\displaylimits_{q=\beta_j} \big(K_j(z_1,q) d_{q}K_j(z_2,q)
-K_j(z_2,q) d_{q}K_j(z_1,q)\big)
\nonumber
\\[-1ex]
&\qquad\qquad
\times
\Big(U(q,q;I)
+U(\sigma_j(q),\sigma_j(q);I)\Big)\;.
\end{align*}
We insert the recursion kernel (\ref{eq:kernel}). There are only contributions if the
differential hits the numerator:
\begin{align}
\Delta_j(z_1,z_2,I) &= 
\Res\displaylimits_{q=\beta_j} \frac{
\big\{\frac{1}{2}(B(z_2,q)-B(z_2,\sigma_i(q)))
\frac{1}{2} \int_{\sigma_i(q)}^q B(z_1,\,.\,)
-z_1\leftrightarrow z_2\big\}
}{(y(q)-y(\sigma_j(q)))^2}
\nonumber
\\
&\qquad\qquad
\times
\Big(\frac{U(q,q;I)}{dx(q) dx(q)}
+\frac{U(\sigma_j(q),\sigma_j(q);I)}{dx(\sigma_j(q))dx(\sigma_j(q))}
\Big)
\nonumber
\\
&= 
\Res\displaylimits_{q=\beta_j} \frac{
B(z_2,q) \frac{1}{2}\int_{\sigma_j(q)}^q B(z_1,\,.\,)
-B(z_1,q) \frac{1}{2}\int_{\sigma_j(q)}^q B(z_2,\,.\,)
}{(y(q)-y(\sigma_i(q)))^2}
\nonumber
\\
&\qquad\qquad
\times
\Big(\frac{U(q,q;I)}{dx(q) dx(q)}
+\frac{U(\sigma_j(q),\sigma_j(q);I)}{dx(\sigma_j(q))dx(\sigma_j(q))}
\Big)\;,
\nonumber
\end{align}
where again the invariance (\ref{inv-Galois}) under 
Galois conjugation was used.

Now we use the Lemma A.1 inside the proof of \cite[Thm 4.6]{Eynard:2007kz}
which states that for a function $f(q)$ which is 
holomorphic at a ramification point $\beta_j$
and Galois-invariant $f(q)=f(\sigma_j(q))$,
\begin{align*}
& \Res\displaylimits_{q=\beta_j} 
\frac{\frac{1}{2} B(z_2,q)\int_{\sigma_j(q)}^{q} B(z_1,\,.\,)
  -\frac{1}{2} B(z_1,q)\int_{\sigma_j(q)}^{q} B(z_1,\,.\,)
}{(y(q)-y(\sigma_j(q)))^2}
f(q)
\\
&= \Res\displaylimits_{q=\beta_j}  \frac{\frac{1}{2} B(z_2,q)
  B(z_1,q)-\frac{1}{2} B(z_1,q)B(z_2,q)}{
(y(q)-y(\sigma_j(q))) dy(q)} f(q)=0
\end{align*}
by l'Hospital's rule.
By the quadratic and linear loop equations
\cite[Prop.\ 2.10]{Hock:2021tbl}
and \cite[Prop.\ 2.6]{Hock:2021tbl}, such a function is
\[
f(q)=
2 (y(\sigma_j(q))-y(q))\frac{\omega^{(0)}_{|I|+1}(\sigma_j(q),I)}{
  dx(\sigma_j(q))}
+
\frac{U(q,q;I)}{dx(q)dx(q)}+\frac{U(\sigma_j(q),\sigma_j(q);I)}{
  dx(\sigma_j(q))
  dx(\sigma_j(q))}\;.
\]
Therefore, we arrive at
\begin{align}
\Delta_j(z_1,z_2,I) &=2
\Res\displaylimits_{q=\beta_j} 
\big(K_j(z_1,q)B(z_2,q) 
-K_j(z_2,q)B(z_1,q) \big) \omega^{(0)}_{|I|+1}(\sigma(q),I)\;.
\label{Delta}
\end{align}
The result (\ref{Delta}) is inserted back into
(\ref{om0I-1beta-diff}). It cancels there the line (***) and removes
(here we rename $q\mapsto \tilde{q}$) in the last term of the line
(**) the Bergman kernel $B(z_2,\tilde{q})$ from
$\omega^{(0)}_2(z_2,\tilde{q})$.  The remainder (\ref{om02}) can be
written as a residue,
\[
  -\omega^{(0)}_2(z_2,\tilde{q})+B(z_2,\tilde{q})= B(z_2,\iota \tilde{q})
  =- \Res\displaylimits_{q=\iota \tilde{q}} \frac{dz_2}{z_2-q}
  \omega^{(0)}_2(q,\tilde{q})\;,
\]  
and this adds to the sum in the third line of (\ref{om0I-1beta-diff})
the missing case $I_3=I$. In summary, we get 
\begin{align}
  &\mathcal{P}_{z_1;\beta_j} \omega^{(0)}_{I|+2}(z_2,z_1,I)
  -\mathcal{P}_{z_1;\beta_j} \omega^{(0)}_{|I|+2}(z_1,z_2,I)
  \label{om0I-1beta-diff-a}
  \\
&  = - 2 \Res\displaylimits_{\tilde{q}=\beta_j}K_j(z_1,\tilde{q})
\sum_{\substack{I_3\subset I \\ I_3\neq \emptyset}}
\omega^{(0)}_{|I_3|+1}(\sigma_j(\tilde{q}),I_3)
\Res\displaylimits_{q=\tilde{q}} \frac{dz_2}{z_2-\iota q}
\omega^{(0)}_{|I\setminus I_3|+2}(\tilde{q},\iota q, I\setminus I_3) 
\tag{\#}
\\
&+\Res\displaylimits_{\tilde{q}=\beta_j}
\frac{dz_1}{z_1-\tilde{q}}
\Res\displaylimits_{q=\iota \tilde{q}} \frac{dz_2}{z_2-q}
\omega^{(0)}_{|I|+2}(q,\tilde{q},I)
\nonumber
\end{align}
To obtain the line (\#) we have used the symmetry 
$\omega^{(0)}_{|I\setminus I_3|+2}(q,\tilde{q},I\setminus I_3)
=\omega^{(0)}_{|I\setminus I_3|+2}(\tilde{q},q,I\setminus I_3)$, allowed
because $|I_3|\geq 1$,  renamed $q\mapsto \iota q$ to prepare the next step
and changed $\Res_{\iota q=\iota\tilde{q}}$ to $\Res_{q=\tilde{q}}$ according
to (\ref{change-iota}). Next, in the line (\ref{om0I-1beta-diff-a}\#), we commute the 
residues with (\ref{commut-residues-beta-0})
for $q\leftrightarrow \tilde{q}$ and take into account that by 
(\ref{no-res-iotabeta}) an inner residue
$\Res_{q=\beta_j}$ does not contribute. The line (\#) thus takes the form
\begin{align*}
\text{(\ref{om0I-1beta-diff-a}\#)}
&  = - 2 \Res\displaylimits_{q=\beta_j}
\frac{dz_2}{z_2-\iota q}
\Res\displaylimits_{\tilde{q}=\beta_j}
K_j(z_1,\tilde{q})
\sum_{\substack{I_1\sqcup I_2=I \\ I_2\neq \emptyset}}
\omega^{(0)}_{|I_1|+2}(\tilde{q}, \iota q,I_1)
\omega^{(0)}_{|I_2|+1}(\sigma_j(\tilde{q}),I_2)
\\
&=- \Res\displaylimits_{q=\beta_j}
\frac{dz_2}{z_2-\iota q}
\Res\displaylimits_{\tilde{q}=\beta_j}
\frac{dz_1}{z_1-\tilde{q}}
\omega^{(0)}_{|I|+2}(\tilde{q},\iota q,I)\;.
\end{align*}
We have taken into account that the inner residue
$\Res\displaylimits_{\tilde{q}=\beta_j}$, which is the case $i=j$ of the first line of
(\ref{sol:omega}), is precisely the projection to the principal part of the
Laurent series of $\omega^{(0)}_{|I|+2}(z_1,\iota q,I)$ at $z_1=\beta_j$.
 Next we use the commutation
rule (\ref{commut-residues-beta-0}) in the other order, taking into
account that by (\ref{no-res-iotabeta}) an inner residue $\Res_{q=\beta_j}$ does
not contribute, and obtain after flip $q\mapsto \iota q$
and reexpession via (\ref{change-iota}) the result
\begin{align*}
\text{(\ref{om0I-1beta-diff-a}\#)}
&=- \Res\displaylimits_{\tilde{q}=\beta_j}
\frac{dz_1}{z_1-\tilde{q}}
\Res\displaylimits_{q= \iota \tilde{q}}
\frac{dz_2}{z_2-q}
\omega^{(0)}_{|I|+2}(\tilde{q},q,I)\;.
\end{align*}
This reduces (\ref{om0I-1beta-diff-a}) to 
\begin{align*}
  &\mathcal{P}_{z_1;\beta_j} \omega^{(0)}_{I|+2}(z_2,z_1,I)
  -\mathcal{P}_{z_1;\beta_j} \omega^{(0)}_{|I|+2}(z_1,z_2,I)
\\
&=\Res\displaylimits_{\tilde{q}=\beta_j}
\frac{dz_1}{z_1-\tilde{q}}
\Res\displaylimits_{q= \iota \tilde{q}} \frac{dz_2}{z_2-q}
\big(\omega^{(0)}_{|I|+2}(q,\tilde{q},I)-\omega^{(0)}_{|I|+2}(\tilde{q},q,I)\big)\;.
\nonumber
\end{align*}
The inner residue $\Res\displaylimits_{q= \iota \tilde{q}}$ vanishes
as proved in the previous section when setting there 
$z_2\mapsto \tilde{q}$ and $z_1\mapsto z_2$.
This completes the proof that
$\omega^{(0)}_{I|+2}(z_1,z_2,I)-\omega^{(0)}_{I|+2}(z_2,z_1,I)$ is holomorphic at 
$z_1=\beta_j$.

\section*{The pole at $z_1= \iota u_k$}

This turns out to be the most involved part. As noted before, the
residue formula directly guarantees symmetry in all arguments except
the first one. We thus compute the pole of
$\omega^{(0)}_{|I|+3}(z_1,u,z_2,I)-\omega^{(0)}_{|I|+3}(z_2,z_1,u,I)$
at $z_1=\iota u$, where $I=\{u_1,...,u_m\}$ with $m\geq 1$.
The representation
(\ref{AHP-pole-uz}) for $z\mapsto \tilde{q}$ allows us to write the
first term as
\begin{align}
&  \mathcal{P}_{z_1;\iota u} \omega^{(0)}_{|I|+3}(z_1,u,z_2,I)
\\
&=  -d_u\Big[\Res\displaylimits_{\tilde{q}=\iota u} \frac{dz_1}{z_1-\tilde{q}}
\sum_{s=1}^{|I|+1}
\sum_{\substack{I_1\sqcup...\sqcup I_s=I\cup\{z_2\}\\I_1,...,I_s\neq \emptyset}}
  \frac{dy(\iota \tilde{q})}{(y(\iota \tilde{q})-y(u))^{s+1}} \prod_{i=1}^s
    \frac{\omega^{(0)}_{|I_i|+1}(u,I_i)}{dx(u)}\Big]\;.
    \nonumber
\end{align}
We have used that $d_u$ commutes with the residue.  Let us denote the
subset which contains $z_2$ by $I_0\cup \{z_2\}$.  This subset can be
located at $s$ places. We always move it to the first position and
take the factor $s$ into account. We take out a $d_{\tilde{q}}$ differential as
in (\ref{AHP-pole-diff}), which precisely cancels this factor
$s$. Integrating by parts and relabelling the subsets $I_i$ gives
\begin{align}
&  \mathcal{P}_{z_1;\iota u} \omega^{(0)}_{|I|+3}(z_1,u,z_2,I)
\label{om0I-1u-a1}
\\
&=  -d_u\Big[\Res\displaylimits_{\tilde{q}=\iota u} \frac{dz_1d\tilde{q}}{(z_1-\tilde{q})^2}
\sum_{s=0}^{|I|}  \sum_{\substack{
    I_0\sqcup I_1\sqcup ...\sqcup I_s=I\\ I_1,...,I_s\neq \emptyset}}
\frac{\omega^{(0)}_{|I_0|+2}(u,z_2,I_0)\prod_{i=1}^s\omega^{(0)}_{|I_i|+1}(u,I_i)}{
  ((y(\iota \tilde{q})-y(u))dx(u))^{s+1}}
  \Big]\;.
    \nonumber
\end{align}
The case $I_0=\emptyset$ is left as it is. For $I_0\neq \emptyset$, 
$\omega^{(0)}_{|I_0|+2}(u,z_2,I_0)$ is of shorter length and symmetric by
induction hypothesis. We represent it by the recursion formula (\ref{sol:omega}) for 
$\omega^{(0)}_{|I_0|+2}(z_2,u,I_0)$:
\begin{align}
&  \mathcal{P}_{z_1;\iota u} \omega^{(0)}_{|I|+3}(z_1,u,z_2,I)
\label{om0I-1u-a2}
\\
&=  -d_u\Big[\Res\displaylimits_{\tilde{q}=\iota u} \frac{dz_1d\tilde{q}}{(z_1-\tilde{q})^2}
\sum_{s=1}^{|I|}
\sum_{\substack{I_1\sqcup ...\sqcup I_s=I\\ I_1,...,I_s\neq \emptyset}}
\frac{\omega^{(0)}_{2}(u,z_2)\prod_{i=1}^s\omega^{(0)}_{|I_i|+1}(u,I_i)}{
    ((y(\iota \tilde{q})-y(u))dx(u))^{s+1}}
  \Big]
  \nonumber
\\
&  -d_u\Big[\Res\displaylimits_{\tilde{q}=\iota u} \frac{dz_1d\tilde{q}}{(z_1-\tilde{q})^2}
\sum_{s=0}^{|I|-1}  \sum_{\substack{
    I_0\sqcup I_1\sqcup ...\sqcup I_s=I\\ I_0,I_1,...,I_s\neq \emptyset}}
\frac{\prod_{i=1}^s\omega^{(0)}_{|I_i|+1}(u,I_i)}{((y(\iota \tilde{q})-y(u))dx(u))^{s+1}}
\tag{\S}
\\
&\qquad\times \Big\{
2\sum_{i} \Res\displaylimits_{q=\beta_i} \;
  K_i(z_2,q)\sum_{\substack{I'\sqcup I''=I_0\\ I''\neq \emptyset}}
\omega^{(0)}_{|I'|+2}(q,u,I')  \omega^{(0)}_{|I''|+1}(\sigma_i(q),I'')
\tag{\#}
\nonumber
\\
&-2 \sum_{k=1}^{|I_0|} d_{u_k} \Big[
\Res\displaylimits_{q=\iota u_k} \tilde{K}(z_2,q,u_k)
d_{u_k}^{-1}\Big(
\sum_{\substack{I'\sqcup I''=I_0\\ I''\neq \emptyset}}
\omega^{(0)}_{|I'|+2}(q,u,I')  \omega^{(0)}_{|I''|+1}(q,I'')\Big)\Big]
\tag{\#\#}
\\
&-2 d_{u} \Big[\Res\displaylimits_{q= \iota u} \tilde{K}(z_2,q,u)
d_{u}^{-1}
\Big(
\sum_{\substack{I'\sqcup I''=I_0\\ I''\neq \emptyset}}
\omega^{(0)}_{|I'|+2}(q,u,I')  \omega^{(0)}_{|I''|+1}(q,I'')\Big)\Big]
\Big\}
  \Big]\;.
    \nonumber
\end{align}

We return later to (\ref{om0I-1u-a2}) and continue here 
with $\mathcal{P}_{z_1,\iota u}\omega^{(0)}_{|I|+3}(z_2,z_1,u,I)$.
This is a residue applied to the recursion formula (\ref{sol:omega})
for $z\mapsto z_2$ and $I\mapsto \{\tilde{q},u\}\cup I$.
In the cases where independent residues
commute, $(\tilde{q},u)$ must be located in the same
$\omega^{(0)}$ to produce such a pole:
\begin{align}
&\mathcal{P}_{z_1;\iota u}\omega^{(0)}_{|I|+3}(z_2,z_1,u,I)
\label{om0I-1u-b}
\\
&= \Res\displaylimits_{\tilde{q}= \iota u} \frac{dz_1}{z_1-\tilde{q}}\Big\{
2\sum_i \Res\displaylimits_{q=\beta_i} K_i(z_2,q)
\sum_{\substack{I'\sqcup I''=I\\I''\neq \emptyset}}
\omega^{(0)}_{|I'|+3}(q,\tilde{q},u,I')
\omega^{(0)}_{|I''|+1}(\sigma_i(q),I'')
\tag{\dag}
\\[-1ex]
&-2\sum_{k=1}^{|I|} d_{u_k}\Big[\Res\displaylimits_{q= \iota u_k} \tilde{K}(z_2,q,u_k)
\sum_{\substack{I'\sqcup I''=I\\I''\neq \emptyset}}
d_{u_k}^{-1}\Big(\omega^{(0)}_{|I'|+3}(q,\tilde{q},u,I')
\omega^{(0)}_{|I''|+1}(q,I'')\Big)\Big]
\tag{\dag\dag}
\\
&-2 d_{\tilde{q}}\Big[\Res\displaylimits_{q= \iota \tilde{q}} \tilde{K}(z_2,q,\tilde{q})
\sum_{\substack{I'\sqcup I''=I\\I''\neq \emptyset}}
d_{\tilde{q}}^{-1}\Big(\omega^{(0)}_{|I'|+3}(q,\tilde{q},u,I')
\omega^{(0)}_{|I''|+1}(q,I'')\Big)\Big]
\tag{\ddag'}
\\
&-2 d_{\tilde{q}}\Big[\Res\displaylimits_{q= \iota \tilde{q}} \tilde{K}(z_2,q,\tilde{q})
\sum_{\substack{I'\sqcup I''=I}}
d_{\tilde{q}}^{-1}\Big(\omega^{(0)}_{|I'|+2}(q,\tilde{q},I')
\omega^{(0)}_{|I''|+2}(q,u,I'')\Big)\Big]
\tag{\ddag''}
\\
&-2 d_{u}\Big[\Res\displaylimits_{q= \iota u} \tilde{K}(z_2,q,u)
\sum_{\substack{I'\sqcup I''=I\\I''\neq \emptyset}}
d_{u}^{-1}\Big(\omega^{(0)}_{|I'|+3}(q,\tilde{q},u,I')
\omega^{(0)}_{|I''|+1}(q,I'')\Big)\Big]
\tag{\ddag\ddag'}
\\
&-2 d_{u}\Big[\Res\displaylimits_{q= \iota u} \tilde{K}(z_2,q,u)
\sum_{\substack{I'\sqcup I''=I}}
d_{u}^{-1}\Big(\omega^{(0)}_{|I'|+2}(q,\tilde{q},I')
\omega^{(0)}_{|I''|+2}(q,u,I'')\Big)\Big]\Big\}\;.
\tag{\ddag\ddag''}
\end{align}
In the line (\dag), because of shorter length, we have 
$\omega^{(0)}_{|I'|+3}(q,\tilde{q},u,I')=\omega^{(0)}_{|I'|+3}(\tilde{q},u,q,I')$
by induction hypothesis. We commute the residues and use
the identity (\ref{om0I-1u-a1}) for $z_2\mapsto q$ and $I\mapsto I'$.
After relabelling the index sets and commuting the residues again we
get for the line (\dag) 
exactly the same expression as the line (\#), multiplied by the line (\S), of
(\ref{om0I-1u-a2}).  Similar considerations prove that the line
(\dag\dag) of (\ref{om0I-1u-b}) is equal to the line (\#\#), multiplied by the
line (\S), of (\ref{om0I-1u-a2}).

The lines (\ddag') and (\ddag'') can be combined to a sum over
decompositions $I' \sqcup I''=I\cup \{u\}$ of
$\omega^{(0)}_{|I'|+3}(q,\tilde{q},I')\omega^{(0)}_{|I''|+1}(q,I'')$;
and the lines (\ddag\ddag') and (\ddag\ddag'') to a sum
over decompositions $I' \sqcup I''= I\cup \{\tilde{q}\}$ of
$\omega^{(0)}_{|I'|+3}(q,u,I')\omega^{(0)}_{|I''|+1}(q,I'')$.

With these considerations
and $\omega^{(0)}_{|I|+3}(z_1,z_2,u,I)\equiv \omega^{(0)}_{|I|+3}(z_1,u,z_2,I)$
the difference
between (\ref{om0I-1u-a2}) and (\ref{om0I-1u-b}) reduces to
\begin{align}
  &  \mathcal{P}_{z_1;\iota u} \big(\omega^{(0)}_{|I|+3}(z_1,z_2,u,I)
  -\omega^{(0)}_{|I|+3}(z_2,z_1,u,I)\big)
\label{om0I-1u-diff}
\\
&=  -d_u\Big[\Res\displaylimits_{\tilde{q}=\iota u} \frac{dz_1d\tilde{q}}{(z_1-\tilde{q})^2}
\sum_{s=1}^{|I|}
\sum_{\substack{I_1\sqcup ...\sqcup I_s=I\\ I_1,...,I_s\neq \emptyset}}
\frac{\omega^{(0)}_{2}(u,z_2)\prod_{i=1}^s\omega^{(0)}_{|I_i|+1}(u,I_i)}{
    ((y(\iota \tilde{q})-y(u))dx(u))^{s+1}}
  \Big]
\tag{a}
\\
&  +d_u\Big[\Res\displaylimits_{\tilde{q}= \iota u} \frac{dz_1d\tilde{q}}{(z_1-\tilde{q})^2}
\sum_{s=0}^{|I|-1}  \sum_{\substack{
    I_0\sqcup I_1\sqcup ...\sqcup I_s=I\\ I_0,I_1,...,I_s\neq \emptyset}}
\frac{\prod_{i=1}^s\omega^{(0)}_{|I_i|+1}(u,I_i)}{((y(\iota\tilde{q})-y(u))dx(u))^{s+1}}
\tag{b'}
\\
&\qquad\times 
2 d_{u} \Big[
\Res\displaylimits_{q= \iota u} \tilde{K}(z_2,q,u)
d_{u}^{-1}
\Big(
\sum_{\substack{I'\sqcup I''=I_0\\ I''\neq \emptyset}}
\omega^{(0)}_{|I'|+2}(q,u,I')  \omega^{(0)}_{|I''|+1}(q,I'')\Big)
  \Big]\Big]
\tag{b''}
    \\
&-2\Res\displaylimits_{\tilde{q}= \iota u} \frac{dz_1d\tilde{q}}{(z_1-\tilde{q})^2}
\Res\displaylimits_{q= \iota \tilde{q}} \tilde{K}(z_2,q,\tilde{q})
\sum_{\substack{I'\sqcup I''=I\cup \{u\} \\I''\neq \emptyset}} \!\!\! 
d_{\tilde{q}}^{-1}\big(\omega^{(0)}_{|I'|+2}(q,\tilde{q},I')\big)
\omega^{(0)}_{|I''|+1}(q,I'')
\tag{c}
\\
&+2 d_{u}\Big[
\Res\displaylimits_{\tilde{q}=\iota u} \frac{dz_1}{z_1-\tilde{q}}
\Res\displaylimits_{q= \iota u} \tilde{K}(z_2,q,u) \!\!\!\!
\sum_{\substack{I'\sqcup I''=I\cup \{\tilde{q}\} \\I''\neq \emptyset}} \!\!\!\!\!\!
d_{u}^{-1}\big(\omega^{(0)}_{|I'|+2}(q,u,I')\big)
\omega^{(0)}_{|I''|+1}(q,I'')\Big]\,.
\tag{d}
\end{align}
In the last two lines we have either integrated by parts in $\tilde{q}$ or moved
$d_u$ though the outer residue.

We process the individual lines of (\ref{om0I-1u-diff}), where the
main tool will be the commutation rule of residues (\ref{commut-res-u}).
In (\ref{om0I-1u-diff}c), because of shorter length,
the induction hypothesis gives $\omega^{(0)}_{|I'|+2}(q,\tilde{q},I')=
\omega^{(0)}_{|I'|+2}(\tilde{q},q,I')$. Lemma~\ref{lemma:resK} for
$I\mapsto I\cup \{u\}$ gives 
\begin{align*}
\text{(\ref{om0I-1u-diff}c)}
&=-\Res\displaylimits_{\tilde{q}=\iota u} \frac{dz_1d\tilde{q}}{(z_1-\tilde{q})^2}
\Res\displaylimits_{q=\iota \tilde{q}} \frac{dz_2dq}{(z_2-q)^2}
\sum_{s=1}^{|I|+1}\frac{1}{s}
\sum_{\substack{I_1\sqcup...\sqcup I_s=I\cup \{u\}\\ I_1,...,I_s\neq \emptyset}}
\prod_{i=1}^s 
\frac{\omega^{(0)}_{|I_i|+1}(q,I_i)}{dx(q)(y(\iota \tilde{q})-y(q))}\;.
\end{align*}
We flip $q\mapsto \iota q$, take (\ref{change-iota}) into account and
use the commutation rule (\ref{commut-res-u}) of residues. There is no
contribution if $\Res_{\tilde{q}=\iota u}$ is the inner residue. If
$\Res_{q=\iota u}$ is the inner residue, then the only contribution is
the case that $u$ appears in a factor $\omega^{(0)}_2(\iota
q,u)$. This factor can arise at $s$ places, which compensate the
prefactor $\frac{1}{s}$:
\begin{align*}
\text{(\ref{om0I-1u-diff}c)}
&=
\Res\displaylimits_{\tilde{q}=\iota u}
\Res\displaylimits_{q=\iota u}
\frac{dz_1d\tilde{q}}{(z_1-\tilde{q})^2}
\frac{dz_2d(\iota q)}{(z_2-\iota q)^2}
\frac{\omega^{(0)}_2(\iota q,u)}{dx(\iota q)(y(\iota\tilde{q})-y(\iota q))}
\\
&\times \sum_{s=1}^{|I|}
\sum_{\substack{I_1\sqcup...\sqcup I_s=I\\ I_1,...,I_s\neq \emptyset}}
\prod_{i=1}^s 
\frac{\omega^{(0)}_{|I_i|+1}(\iota q,I_i)}{dx(\iota q)
  (y(\iota \tilde{q})-y(\iota q))}\;.
\end{align*}
With $\omega^{(0)}_2(\iota q,u)=\omega^{(0)}_2(q,\iota u)$ the residue is now
easy to compute:
\begin{align*}
\text{(\ref{om0I-1u-diff}c)} 
&=d_u\Big[
\Res\displaylimits_{\tilde{q}=\iota u}
\frac{dz_1d\tilde{q}}{(z_1-\tilde{q})^2}
\frac{dz_2du}{(z_2-u)^2}
\frac{1}{dx(u) (y(\iota \tilde{q})-y(u))}
\\[-1ex]
&\times \sum_{s=1}^{|I|}
\sum_{\substack{I_1\sqcup...\sqcup I_s=I\\ I_1,...,I_s\neq \emptyset}}
\prod_{i=1}^s 
\frac{\omega^{(0)}_{|I_i|+1}(u,I_i)}{dx(u)(y(\iota\tilde{q})-y(u))}\Big]\;.
\end{align*}
This cancels partly with (\ref{om0I-1u-diff}a). Introducing
\begin{align}
b_r(z_1,u) &:=  \Res\displaylimits_{q=\iota u}
\frac{dz_1dq}{(z_1-q)^2(y(\iota q)-y(u))^{r}}\;,
\label{bz1u}
\\
&\text{where}\qquad 
\frac{dz_1 dq}{(z_1-q)^2 dx(q)}
=-\sum_{r=0}^\infty b_{r+1}(z_1,u) \cdot (y(\iota q)-y(u))^r
\nonumber
\end{align}
near $q=\iota u$ (here $dx(q)=-dy(\iota q)$ is used),
we get with (\ref{om02})
\begin{align}
\text{(\ref{om0I-1u-diff}a+c)} 
&=d_u\Big[
\sum_{s=1}^{|I|} b_{s+1}(z_1,u)
\frac{B(z_2,\iota u)}{dx(u)}
\sum_{\substack{I_1\sqcup...\sqcup I_s=I\\ I_1,...,I_s\neq \emptyset}}
\prod_{i=1}^s \frac{\omega^{(0)}_{|I_i|+1}(u,I_i)}{dx(u)}\Big]\;.
\label{om0I-1u-diffac}
\end{align}

We pass to (\ref{om0I-1u-diff}d) where, because of shorter length, the
induction hypothesis allows us to write
$\omega^{(0)}_{|I'|+2}(q,u,I') =\omega^{(0)}_{|I'|+2}(u,q,I')$.
Lemma~\ref{lemma:resK} for
$\tilde{q}\mapsto u$ and $I\mapsto I\cup \{\tilde{q}\}$ gives
\begin{align*}
  \text{(\ref{om0I-1u-diff}d)}
&= d_{u}\Big[
\Res\displaylimits_{\tilde{q}= \iota u} \frac{dz_1}{z_1-\tilde{q}}
\Res\displaylimits_{q= \iota u} \frac{dz_2dq}{(z_2-q)^2}
\sum_{s=1}^{|I|+1}\frac{1}{s}
\sum_{\substack{I_1\sqcup...\sqcup I_s=I\cup \{\tilde{q}\} \\ I_1,...,I_s\neq \emptyset}}
\prod_{i=1}^s 
\frac{\omega^{(0)}_{|I_i|+1}(q,I_i)}{dx(q)(y(\iota u)-y(q))}
\Big]\;.
\end{align*}
We subtract the vanishing residue in the other order
$\Res\displaylimits_{q= \iota u}\Res\displaylimits_{\tilde{q}= \iota u}$ and use
the commutation rule (\ref{commut-res-u}) to have an inner
$\Res\displaylimits_{q=\tilde{q}}$.  The only contribution is the case
of a factor $\omega^{(0)}(q,\tilde{q})$.  It can occur at $s$ places
and cancels the factor $\frac{1}{s}$: 
\begin{align*}
  \text{(\ref{om0I-1u-diff}d)}
&=- d_{u}\Big[
\Res\displaylimits_{\tilde{q}= \iota u} \frac{dz_1}{z_1-\tilde{q}}
\Res\displaylimits_{q=\tilde{q}}\frac{dz_2dq}{(z_2-q)^2}
\\*[-0.5ex]
&\times
\frac{\omega^{(0)}_{2}(q,\tilde{q})}{dx(q)(y(\iota u)-y(q))}
\sum_{s=1}^{|I|} 
\sum_{\substack{I_1\sqcup ... \sqcup I_s=I\\ I_1,...,I_s\neq \emptyset}}
\prod_{i=1}^s
\frac{\omega^{(0)}_{|I_i|+1}(q,I_i)}{dx(q)(y(\iota u)-y(q))}
\Big]
\\
&= d_{u}\Big[
\Res\displaylimits_{\tilde{q}= \iota u} \frac{dz_1d\tilde{q}}{(z_1-\tilde{q})^2}
\frac{dz_2}{(z_2-\tilde{q})^2}
\frac{1}{x'(\tilde{q})(y(\iota u)-y(\tilde{q}))}
\\
&\times
\sum_{s=1}^{|I|} 
\sum_{\substack{I_1\sqcup ... \sqcup I_s=I\\ I_1,...,I_s\neq \emptyset}}
\prod_{i=1}^s
\frac{\omega^{(0)}_{|I_i|+1}(\tilde{q},I_i)}{dx(\tilde{q})(y(\iota u)-y(\tilde{q}))}
\Big]\;.
\end{align*}
Using (\ref{bz1u}) we bring (\ref{om0I-1u-diff}d) into the form
\begin{align*}
  \text{(\ref{om0I-1u-diff}d)}
&= -d_{u}\Big[
\sum_{s=1}^{|I|} \sum_{r=0}^{s}b_{s-r+1} (z_1,u)
\Res\displaylimits_{\tilde{q}=\iota u} 
\frac{dz_2d\tilde{q}}{(z_2-\tilde{q})^2}
\frac{1}{(y(\iota u)-y(\tilde{q}))^{r+1}}
\\[-1ex]
&\times
\sum_{\substack{I_1\sqcup ... \sqcup I_s=I\\ I_1,...,I_s\neq \emptyset}}
\prod_{i=1}^r
\frac{\omega^{(0)}_{|I_i|+1}(\tilde{q},I_i)}{dx(\tilde{q})}
\prod_{i=r+1}^s
\frac{(y(\iota \tilde{q})-y(u))}{(y(\iota u)-y(\tilde{q}))}
\frac{\omega^{(0)}_{|I_i|+1}(\tilde{q},I_i)}{dx(\tilde{q})}
\Big]\;.
\end{align*}

In the line (\ref{om0I-1u-diff}b''), because of shorter length,
the induction hypothesis implies
$\omega^{(0)}_{|I'|+2}(q,u,I')=\omega^{(0)}_{|I'|+2}(u,q,I')$.
Lemma~\ref{lemma:resK} for
$I\mapsto I_0$ and $\tilde{q}\mapsto u$ gives
\begin{align*}
\text{(\ref{om0I-1u-diff}b'')}
&=d_{u} \Big[
\Res\displaylimits_{q= \iota u} \frac{dz_2dq}{(z_2-q)^2}
\sum_{s=1}^{|I|}\frac{1}{s}
\sum_{\substack{I_1\sqcup...\sqcup I_r=I_0\\ I_1,...,I_r\neq \emptyset}}
\prod_{i=1}^r 
\frac{\omega^{(0)}_{|I_i|+1}(q,I_i)}{dx(q)(y(\iota u)-y(q))}\Big]\;.
\end{align*}
We decide to carry out the
$d_u$ differential to remove the factor $\frac{1}{s}$.
We multiply it with the line (\ref{om0I-1u-diff}b') in which
(\ref{bz1u}) is used to simplify the expression. We can cancel 
$dy(\iota u)=-dx(u)$. After relabelling summation indices and index sets we get for
(\ref{om0I-1u-diff}b'$\times$b'')
\begin{align*}
  \text{(\ref{om0I-1u-diff}b'$\times$b'')}
  &= d_u\Big[\sum_{s=1}^{I} \sum_{r=1}^s b_{s-r+1}(z_1,u)
  \sum_{\substack{I_1\sqcup ...\sqcup I_s=I\\I_1,...,I_s \neq \emptyset}}
  \Res\displaylimits_{q=\iota u}\Big(
  \frac{dz_2dq}{(z_2-q)^2(y(\iota u)-y(q))^{r+1}} 
  \\
  &\times \prod_{i=1}^r \frac{\omega^{(0)}_{|I_i|+1}(q,I_i)}{dx(q)}\Big)
\prod_{i=r+1}^s \frac{\omega^{(0)}_{|I_i|+1}(u,I_i)}{dx(u)}\Big]\;.
\end{align*}
We observe that the sum (\ref{om0I-1u-diff}a+c) of lines, given in
(\ref{om0I-1u-diffac}),
is with $dx(u)=-dy(\iota u)$ just the missing case $r=0$ of 
(\ref{om0I-1u-diff}b'$\times$b'').

We collect the evaluated lines of (\ref{om0I-1u-diff}):
\begin{align}
  &  \mathcal{P}_{z_1;\iota u} \big(\omega^{(0)}_{|I|+3}(z_1,z_2,u,I)
  -\omega^{(0)}_{|I|+3}(z_2,z_1,u,I)\big)
\label{om0I-1u-diff-eval}
\\
&= d_u\Big[\sum_{s=1}^{|I|} \sum_{r=0}^s b_{s-r+1}(z_1,u)
  \sum_{\substack{I_1\sqcup ...\sqcup I_s=I\\I_1,...,I_s \neq \emptyset}}
  \Res\displaylimits_{q=\iota u}\Big\{
  \frac{dz_2dq}{(z_2-q)^2(y(\iota u)-y(q))^{r+1}} 
    \nonumber
  \\
  &\times \prod_{i=1}^r \frac{\omega^{(0)}_{|I_i|+1}(q,I_i)}{dx(q)}
\Big(
  \prod_{i=r+1}^s \frac{\omega^{(0)}_{|I_i|+1}(u,I_i)}{dx(u)}
-
\prod_{i=r+1}^s
\frac{(y(\iota q)-y(u))}{(y(\iota u)-y(q))}
\frac{\omega^{(0)}_{|I_i|+1}(q,I_i)}{dx(q)}\Big)\Big\}
\Big]\;.
\nonumber
\end{align}
In fact the sum can be restricted to $0\leq r<s$.

Similarly to \cite[eq.\ (2.7)]{Hock:2021tbl} we introduce
the representation
near $q= \iota u$ (setting there 
$q\mapsto \iota u$ and $z\mapsto q$ and taking $x(\iota q)=-y(q)$ into account)
\begin{align}
&  \frac{(y(\iota q)-y(u))}{(y(\iota u)-y(q))}
  \frac{\omega^{(0)}_{|I|+1}(q,I)}{dx(q)}
  =\sum_{n=0}^\infty (y(\iota q)-y(u))^n \nabla^n\omega^{(0)}_{|I|+1}(\iota u,I)\;,
\label{omega-Taylor}
\\[-1ex]
&\text{where} \quad
\nabla^n\omega^{(0)}_{|I|+1}(\iota u ,I):=
  \Res\displaylimits_{q\to \iota u}
  \frac{\omega^{(0)}_{|I|+1}(q,I)}{(y(q)-y(\iota u))(y(\iota q)-y(u))^n}\;.
  \nonumber
\nonumber
\end{align}
In terms of
\begin{align}
a_r(z_2,u):= \Res\displaylimits_{q= \iota u}
  \frac{dz_2dq}{(z_2-q)^2(y(\iota u)-y(q))(y(\iota q)-y(u))^{r}} 
\end{align}
we thus get
\begin{align}
  &  \mathcal{P}_{z_1; \iota u} \big(\omega^{(0)}_{|I|+3}(z_1,z_2,u,I)
  -\omega^{(0)}_{|I|+3}(z_2,z_1,u,I)\big)
\label{P12-final}
  \\
&= d_u\Big[\sum_{s=1}^{|I|} \sum_{r=0}^{s-1} \sum_{t=0}^r
b_{s-r+1}(z_1,u)a_{r-t}(z_2,u)
\nonumber
  \\
  &\times   \sum_{\substack{I_1\sqcup ...\sqcup I_s=I\\I_1,...,I_s \neq \emptyset}}
\Big(\sum_{n_1+...,n_r=t}
\prod_{i=1}^r \nabla^{n_i}\omega^{(0)}_{|I_i|+1}( \iota u,I_i)
\prod_{i=r+1}^s \frac{\omega^{(0)}_{|I_i|+1}(u,I_i)}{dx(u)}  
\nonumber
\\
&\qquad-\sum_{n_1+...,n_s=t}
  \prod_{i=1}^s \nabla^{n_i}\omega^{(0)}_{|I_i|+1}(\iota u,I_i)
\Big)\Big]
\nonumber
\\
&=
d_u\Big[\sum_{l=1}^{|I|} \sum_{k=0}^{|I|-l} 
b_{l+1}(z_1,u)a_{k}(z_2,u)
\nonumber
  \\
  &\times  \Big(
  \sum_{r=k+l}^{|I|}
  \sum_{\substack{J_1\sqcup ...\sqcup J_r=I\\J_1,...,J_r \neq \emptyset}}
\sum_{\substack{m_1+...,m_{r-l}\\ ~~~=r-k-l}}
\prod_{i=1}^{r-l} \nabla^{m_i}\omega^{(0)}_{|J_i|+1}(\iota u,J_i)
\prod_{i=r-l+1}^r \frac{\omega^{(0)}_{|J_i|+1}(u,J_i)}{dx(u)}  
\tag{*}
\\
&\qquad-\sum_{s=k+l}^{|I|}
  \sum_{\substack{I_1\sqcup ...\sqcup I_s=I\\I_1,...,I_s \neq \emptyset}}
  \sum_{\substack{n_1+...,n_s\\ ~~~=s-k-l}}
  \prod_{i=1}^s \nabla^{n_i}\omega^{(0)}_{|I_i|+1}(\iota u,I_i)
  \Big)\Big]\;. \tag{**}
\end{align}

The remaining task is to prove that for any pair $(k,l)$ the
difference in the last two lines (*) and (**) vanishes
identically. If this succeeds we have shown that
$\omega^{(0)}_{|I|+3}(z_1,z_2,u,I)
-\omega^{(0)}_{|I|+3}(z_2,z_1,u,I)$ is holomorphic at any potential pole and thus
holomorphic everywhere on $\mathbb{P}^1$, i.e.\ equal to zero.

\section{Translation into a combinatorial problem}

\label{sec:conjecture}

In this section, we prove Theorem~\ref{thm:symmetry-differentials} by reduction to the combinatorial statement given in Theorem~\ref{conj:main}. The proof of the latter theorem is given in the next section. 

\begin{proof}[Proof of Theorem~\ref{thm:symmetry-differentials}]
We use \cite[Lemma 2.2]{Hock:2021tbl}
for $q\mapsto \iota q$ which in the changed convention (see remarks after
Thm.~\ref{thm:flip})  states that, 
as consequence of the involution identity (\ref{eq:flip-om})
and with (\ref{omega-Taylor}), one has
\begin{align}
\label{involution-expand}
  \frac{\omega^{(0)}_{|I|+1}(u,I)}{dx(u)}
  =\sum_{r=1}^{|I|}\frac{1}{r}
  \sum_{\substack{I_1\sqcup ...\sqcup I_r=I \\I_1,...,I_r\neq \emptyset}} ~
  \sum_{n_1+...+n_r=r-1} \nabla^{n_i}\omega^{(0)}_{|I_i|+1}(\iota u,I_i)\;.
\end{align}
Inserting this relation into the line (*) of (\ref{P12-final}) one
generates precisely the terms of the same structure as in the last
line (**). But it is by no means clear that all prefactors cancel! It
is the consequence of an intriguing combinatorial identity that we
will now extract from the problem.

Let us fix integers $(k,l)$ with $l\geq 1$ and $k\geq 0$, an
integer $s$ with $k+l\leq s\leq |I|$ and a partition
$I_1\sqcup ....\sqcup I_s=I\equiv \{u_1,...,u_m\}$ into non-empty
disjoint $I_i$. We can compare any two $I_i,I_j$ by
$I_i<I_j$ if $\min(a\;:~u_a\in I_i)<\min(b\;:~u_b\in I_j)$. Then the
sum over decompositions $I=I_1\sqcup ....\sqcup I_s$ in (**) can be
collected into a global factor $s!$ times the sum over ordered
partitions $\displaystyle J_1 \stackrel{<}{\sqcup} ...\stackrel{<}{\sqcup} J_s$
with $J_1<J_2<...<J_s$. We thus
consider in the last line (**) of (\ref{P12-final})
a single term 
\begin{align}
  s! \nabla^{n_1}\omega^{(0)}_{|I_1|+1}(\iota u,J_1)\cdots
   \nabla^{n_s}\omega^{(0)}_{|I_s|+1}(\iota u,J_s)
  \label{omega-part-1}
\end{align}
with $J_1<J_2<...<J_s$ and $n_1+...+n_s=s-k-l$, and keep these data fixed. 

Let $\mathsf{P}_k(n)$ be the set of partitions of $n$ into $k$ parts.
To the tuple $(n_1,...,n_s)$ we assign 
a partition $\nu \in \mathsf{P}_s(2s-k-l)$ by declaring $\nu$ to be the
weakly decreasingly ordered $\{n_1+1,n_2+1,...,n_s+1\}$. We say that
$(n_1,...,n_s)$ is of type $\nu$.

Similarly, we write a term contributing to the line (*) of (\ref{P12-final}) as
\begin{align}
&  (r-l)! \label{omega-part-2}
\nabla^{m_1}\omega^{(0)}_{|J_1'|+1}(\iota u,J'_1)\cdots
 \nabla^{m_{r-l}} \omega^{(0)}_{|J_{r-l}'|+1}(\iota u,J'_{r-l})
 \frac{\omega^{(0)}_{|I'_{1}|+1}(u,I'_1)}{dx(u)}\cdots
 \frac{\omega^{(0)}_{|I'_{l}|+1}(u,I'_l)}{dx(u)}
\end{align}
where $J_1'<J'_2<...<J'_{r-l}$ are ordered, $I_1',....,I_{l}'$ remain
unordered and $m_1+...+m_{r-l}=r-k-l$.  We also assign to the tuple
$(m_1,...,m_{r-l})$ a partition $\mu \in \mathsf{P}_{r-l}(2r-k-2l)$,
i.e.\ the weakly decreasingly ordered $\{m_1+1,...,m_{r-l}+1\}$.

Now the question is: Does a term (\ref{omega-part-2}), after expanding
the final $l$ factors via (\ref{involution-expand}), produce the term
of the form (\ref{omega-part-1}), and with which fraction?  The
expansion of (\ref{omega-part-2}) will also produce other terms which
are counted for other cases of (\ref{omega-part-1}); here they can
safely be discarded. We sum the fractions contributing to
(\ref{omega-part-1}) over \emph{all} terms (\ref{omega-part-2}). To
have symmetry of the meromorphic differentials, the sum of fractions
must be $s!$. 

Clearly, because the first $r-l$ factors in (\ref{omega-part-2}) stay intact, we need
\begin{align}
  \{(m_1,J'_1)  ,....,(m_{r-l},J'_{r-l}) \} \subset
  \{(n_1,J_1)  ,....,(n_s,J_s) \} \;.
\label{subsetcondition}
\end{align}
The complement must be produced by expanding the last $l$ factors of
(\ref{omega-part-2}). Let us split our task and
fix an integer $k+l\leq r\leq s$ and a partition
$\mu\in \mathsf{P}_{r-l}(2r-k-2l)$ and ask how many subsets
(\ref{subsetcondition}) exist under the condition that
$(m_1,...,m_{r-l})$ is of type $\mu$. Let $g_1$ of the $m_i$ be equal
to $0$ and $h_1$ of the $n_i$ be equal to $0$. We are compatible with
the subset condition (\ref{subsetcondition}) for every choice of
$h_1-g_1$ elements $(0,J_*)$ which are removed (they go into the
complement). The number of these choices is $\binom{h_1}{g_1}$. Next,
let $g_2$ of the $m_i$ be equal to $1$ and $h_2$ of the $n_i$ be equal
to $1$. We are compatible with the subset condition
(\ref{subsetcondition}) for every choice of $h_2-g_2$ elements
$(1,J_*)$ that are removed (they go into the complement). The number
of these choices is $\binom{h_2}{g_2}$. Continuing, we have
\[
  \binom{\nu}{\mu}:= \lim_{N\to \infty} \prod_{i=1}^N
  \binom{h_i}{g_i}
\]
possible subsets (\ref{subsetcondition}) if 
$(m_1,...,m_{r-l})$ is of type $\mu$ and
$(n_1,...,n_{r-l})$ of type $\nu$ and 
\[
  \nu=(\dots, \underbrace{2,...,2}_{h_2},\underbrace{1,...,1}_{h_1})\;,\qquad
    \mu=(\dots, \underbrace{2,...,2}_{g_2},\underbrace{1,...,1}_{g_1})\;.
\]
Note that $g_i=h_i=0$ for large enough $i$ so that the product
$\prod_{i=1}^N$ becomes constant for $N$ large enough. We stress that
it suffices to know the partition type of the tuple
$(m_1,...,m_{r-l})$ because the subset
$\{(m_1,J'_1) ,....,(m_{r-l},J'_{r-l}) \}$ is obtained by removal of
elements from $\{(n_1,J_1) ,....,(n_s,J_s) \}$. The remainder inherits its
order. In particular, one knows the bijection between $J'_j$ and a
subset of $J_i$, and the concrete exponent $m_j$ is completely
determined by the choice of removed elements. In particular, for given
choice of removed elements, there is only one tuple
$((m_1,J'_1) ,....,(m_{r-l},J'_{r-l}) )$ for which $(m_1,...,m_{r-l})$
is of type $\mu$.

We now study the complement
\begin{align*}
&\{(m''_1,J''_1)  ,....,(m''_{s-r+l},J_{s-r+l}'')\}
\\
&:=  \{(n_1,J_1)  ,....,(n_s,J_s) \} \setminus
\{(m_1,J_1')  ,....,(m_{r-l},J'_{r-l}) \}\;.
\end{align*}
It consists of $s-r+l$ elements and must result from the expansion of
$l$ factors $\frac{1}{dx(u)} \omega^{(0)}_{|I_*'|+1}(u,I'_*)$ according to
(\ref{involution-expand}).  The exponent tuple $(m_1'',...,m_{s-r+l}'')$
of the complement will be of type $\rho$ for a partition
$\rho:=\nu\setminus \mu \in \mathsf{P}_{s-r+l}(2s-2r+l)$.

Let us specify an integer $p_1\geq 1$ and look at the expansion
(\ref{involution-expand}) of the first factor
$\frac{1}{dx(u)}\omega^{(0)}_{|I'_{1}|+1}(u,I'_1)$ in
(\ref{omega-part-2}) into $p_1$ terms
$\nabla^{h_1}\omega^{(0)}_{|J_{1}'''|+1}(\iota u,J'''_{1})\cdots
\nabla^{h_{p_1}}\omega^{(0)}_{|J'''_{p_1}|+1}(\iota u,J'''_{p_1})$
(which requires $|I'_1|\geq p_1$).  Such an expansion can contribute
to (\ref{omega-part-2}) if
$\{h_{1},...,h_{p_1}\}\subset \{m_1'',...,m_{s-r+l}''\}$.  There is
always a (non-unique) permutation $\sigma\in \mathcal{S}_{s-r+l}$ with
$\rho= (m_{\sigma(1)}''{+}1,...,m_{\sigma(s-r+l)}''{+}1)$.  Let
$\{h_{1},...,h_{p_1}\}$ be of type $\rho_1$ for a partition
$\rho_1\in \mathsf{P}_{p_1}(2p_1-1)$. There is a (non-unique)
permutation $\tilde{\sigma}\in \mathcal{S}_{p_1}$ with
$\rho_1=(h_{\tilde{\sigma }(1)}{+}1,...,h_{\tilde{\sigma}(p_1)}{+}1)$.
Similar to the discussion before, there are $\binom{\rho}{\rho_1}$
possible choices of subpartitions $\rho_1 \subset \rho$. Every such
choice is a bijection between
$(\tilde{\sigma }(1),...,\tilde{\sigma}(p_1))$ and a subset
(consisting of $p_1$ elements) of
$(\sigma(1),...,\sigma(s-r+l))$. Resolving the permutations
$\sigma,\tilde{\sigma}$ shows that any choice of a subpartition
$\rho_1 \subset \rho$ with $\rho_1\in \mathsf{P}_{p_1}(2p_1-1)$
defines a bijection $h_1=m''_{\pi(1)}, \dots, h_{p_1}=m''_{\pi(p_1)}$
between subsets of $p_1$ elements.  But this induces a bijection
$(h_1,J_1''')= (m''_{\pi(1)},J''_{\pi(1)}),\dots,
(h_{p_1},J_{p_1}''')= (m''_{\pi(p_1)},J''_{\pi(p_1)})$, which implies
$I_1'=J''_{\pi(1)}\cup ...\cup J''_{\pi(p_1)}$.  In summary,
specifying $p_1$, any choice of a subpartition $\rho_1 \subset \rho$
with $\rho_1\in \mathsf{P}_{p_1}(2p_1-1)$ uniquely defines the factor
$\frac{1}{dx(u)}\omega^{(0)}_{|I'_{1}|+1}(u,I'_1)$ in
(\ref{omega-part-2}) whose expansion (\ref{involution-expand}) into
$p_1$ terms produces a unique subset of the terms in
(\ref{omega-part-1}). Since (\ref{involution-expand}) is a sum over
unordered partitions which we sort into the ordered
(\ref{omega-part-1}), there is a weight factor $(p_1-1)!$ for every
choice of $\rho_1\in \mathsf{P}_{p_1}(2p_1-1)$. The weight is the same
for all $\rho_1$, i.e.\
$\frac{1}{dx(u)}\omega^{(0)}_{|I'_{1}|+1}(u,I'_1)$ with sum over
compatible $I_1'$ contributes $(p_1-1)! \binom{\rho}{\rho_1}$
compatible subsets to (\ref{omega-part-1}), with summation over $p_1$
(which cannot be too large, see below) and subpartitions
$\rho_1\subset \rho$.

Continuing in the same manner, the next factor 
$\frac{1}{dx(u)}\omega^{(0)}_{|I'_{2}|+1}(u,I'_2)$ with sum over
compatible $I_2'$ contributes
$(p_2-1)! \binom{\rho\setminus \rho_1 }{\rho_2}$ compatible subsets to 
(\ref{omega-part-1}), with summation over $p_2$ and subpartitions
$\rho_2\subset \rho\setminus \rho_1$ with
$\rho_2\in \mathsf{P}_{p_2}(2p_2-1)$. And so on until 
$\frac{1}{dx(u)} \omega^{(0)}_{|I'_{l-1}|+1}(u,I'_{l-1})$.

For last factor $\frac{1}{dx(u)}\omega^{(0)}_{|I'_{l}|+1}(u,I'_{l})$
there is no choice anymore; its expansion
(\ref{involution-expand}) must produce the remaining
$p_l:=s-r+l-p_1-...-p_{l-1}$ factors
$\nabla^{h_*}\omega^{(0)}_{|J_*|+1}(\iota u,J_*)$ of 
(\ref{omega-part-1}) not obtained before. The need to have
$p_1,...,p_l \geq 1$ restricts the previous $p_i$. There is again
a factor $(p_l-1)!$ from the rearrangement in increasing order,
but no choice anymore for a subpartition. 

In conclusion, we have
\begin{align}
  (r-l)! \binom{\nu}{\mu} \cdot
  (p_1{-}1)! \binom{\nu\setminus \mu}{\rho_1}
  \cdots (p_{l-1}{-}1)! \binom{\nu\setminus (\mu{\cup} \rho_1{\cup} ...{\cup} \rho_{l-2})}{\rho_{l-1}} \cdot 
  (p_l{-}1)!
  \label{weight-partition}
\end{align}
terms (\ref{omega-part-2})
whose expansion (\ref{involution-expand}) produces (among others) the
terms in (\ref{omega-part-1}). Summing the weights 
(\ref{weight-partition}) over admissible $r$, subpartitions $\mu
\in \mathsf{P}_{r-l}(2r-2l-k)$,
size distributions  $(r-l)+p_1+...+p_l=s$ and further subpartitions
$\rho_i\in \mathsf{P}_{p_i}(2p_i-1)$ must yield the same factor $s!$ as in
(\ref{omega-part-1}). Since
\begin{align*}
\binom{\nu}{\mu,\rho_1,...,\rho_l}=
  \binom{\nu}{\mu} \cdot
 \binom{\nu\setminus \mu}{\rho_1}
 \cdots \binom{\nu\setminus (\mu{\cup} \rho_1{\cup} ...{\cup}
   \rho_{l-2})}{\rho_{l-1}}
\end{align*}
is the multinomial coefficient of partitions (where
$\rho_l:=\nu\setminus (\mu\cup \rho_1\cup ... \cup \rho_{l-1}))$ we arrive
at the formulation in Theorem~\ref{conj:main} in the Introduction.

This theorem is proved in the next section, and thus this completes the proof that $\omega^{(0)}_n$ are
symmetric in all arguments.
\end{proof}

\begin{example}
  Let $k=0$, $l=2$, $s=4$ and $\nu=(3,1,1,1)$. 
  The following size decompositions are possible:
  $((r-l)+p_1+p_2)\in \{(0+3+1),(0+2+2),(0+1+3),(1+2+1),(1+1+2),(2+1+1)\}$.

  For the first three cases $r-l=0$, $\mu$ is the empty partition
  and hence omitted. In the second case $(0+2+2)$ we would
  get $\rho_1=\rho_2=(2,1)
  \in \mathsf{P}_2(3)$, which are not subpartitions of $\nu$, so that
  there is no contribution from  $(0+2+2)$.
  In the case $(0+3+1)$ we have $\rho_2=(1)$ and $\rho_1 = (3,1,1)$ or
  $\rho_1 = (2,2,1)$, where the latter is discarded because not a subpartition
  of $\nu$. The contribution of $(0+3+1)$ is thus
  \[
\binom{3}{2,1}\binom{0}{0,0}\binom{1}{1,0}    
    (3-1)!(1-1)! =6\;.    
\]
The $k^{\text{th}}$  multinomial coefficients, here with $k\in \{1,2,3\}$,
reflects the occurrences of $k$ in the partitions.
Clearly, the same contribution is obtained for the case 
$(0+1+3)$.

In the two cases $(1+2+1)$ and $(1+1+2)$ we have one partition
$\rho_i =(2,1)\in \mathsf{P}_2(3)$ that is not a subpartition of
$\nu$, so there is no contribution.

Remains $(2+1+1)$ with $\rho_1=\rho_2=(1)\in \mathsf{P}_1(1)$ and
$\mu \in \mathsf{P}_2(4)$ with solution
$\mu=(3,1)$ or $\mu=(2,2)$. The latter must again be discarded
because it is not a subpartition of $\nu$.
The contribution of $(2+1+1)$ is thus
  \[
\binom{3}{1,1,1}\binom{0}{0,0,0} \binom{1}{1,0,0}
2! (1-1)! (1-1)! =12\;.    
\]
Summing everything, we have $6+0+6+0+0+12 =24$ as required.
\end{example}
\noindent
This example captures the cancellation between, for example,
\[
  4! \nabla^0 \omega^{(0)}_{|J_1|+1}(\iota u,J_1)
  \nabla^0 \omega^{(0)}_{|J_2|+1}(\iota u,J_2)  
 \nabla^2 \omega^{(0)}_{|J_3|+1}(\iota u,J_3)  
\nabla^0 \omega^{(0)}_{|J_4|+1}(\iota u,J_4)  \;,
\]
assuming $J_1<J_2<J_3<J_4$, and the restriction to the same
$\nabla^n$-assignment of the expansion of
\begin{align*}
  &
  \frac{\omega^{(0)}_{|J_1|+1}(u,J_1)}{dx(u)}
  \frac{\omega^{(0)}_{|I_2'|+1}(u,I_2')}{dx(u)}
  \Big|_{I_2'=J_2\cup J_3 \cup J_4}
+   \frac{\omega^{(0)}_{|I_2'|+1}(u,I_1')}{dx(u)}
\Big|_{I_1'=J_2\cup J_3 \cup J_4}
  \frac{\omega^{(0)}_{|J_1|+1}(u,J_1)}{dx(u)}
\\ 
&+\frac{\omega^{(0)}_{|I_1'|+1}(u,I_1')}{dx(u)}
\Big|_{I_1'=J_1\cup J_3 \cup J_4}
\frac{\omega^{(0)}_{|J_2|+1}(u,J_2)}{dx(u)}
+\frac{\omega^{(0)}_{|J_2|+1}(u,J_2)}{dx(u)}
\frac{\omega^{(0)}_{|I_2'|+1}(u,I_2')}{dx(u)}
\Big|_{I_2'=J_1\cup J_3 \cup J_4}
\\
&+ \frac{\omega^{(0)}_{|I_1'|+1}(u,I_1')}{dx(u)}
\Big|_{I_1'=J_1\cup J_2 \cup J_3}
\frac{\omega^{(0)}_{|J_4|+1}(u,J_4)}{dx(u)}
+\frac{\omega^{(0)}_{|J_4|+1}(u,J_4)}{dx(u)}
\frac{\omega^{(0)}_{|I_2'|+1}(u,I_2')}{dx(u)}
\Big|_{I_2'=J_1\cup J_2 \cup J_3}
\\
  &+ 2! 
  \nabla^0 \omega^{(0)}_{|J_1|+1}(\iota u,J_1)  
 \nabla^2 \omega^{(0)}_{|J_3|+1}(\iota u,J_3)  
 \Big(
 \frac{\omega^{(0)}_{|J_2|+1}(u,J_2)}{dx(u)}
 \frac{\omega^{(0)}_{|J_4|+1}(u,J_4)}{dx(u)}
 + \{J_2\leftrightarrow J_4\}\Big)
 \\
 &+ 2! 
  \nabla^0 \omega^{(0)}_{|J_2|+1}(\iota u,J_2)  
 \nabla^2 \omega^{(0)}_{|J_3|+1}(\iota u,J_3)  
\Big( \frac{\omega^{(0)}_{|J_1|+1}(u,J_1)}{dx(u)}
\frac{\omega^{(0)}_{|J_4|+1}(u,J_4)}{dx(u)}
 + \{J_1\leftrightarrow J_4\}\Big)
 \\
 &+ 2!
 \nabla^2 \omega^{(0)}_{|J_3|+1}(\iota u,J_3)  
  \nabla^0 \omega^{(0)}_{|J_4|+1}(\iota u,J_4)  
\Big( \frac{\omega^{(0)}_{|J_1|+1}(u,J_1)}{dx(u)}
\frac{\omega^{(0)}_{|J_2|+1}(u,J_2)}{dx(u)}
+ \{J_1\leftrightarrow J_2\}\Big)
\end{align*}
when inserting (\ref{involution-expand}).
We give one more example for $s=5$:

\begin{example}
  Let $k=1$, $l=3$, $s=5$ and $\nu=(2,1,1,1,1)$.
  The following size decompositions are possible:
  $((r-l)+p_1+p_2+p_2)\in \{(1+2+1+1),(1+1+2+1),(1+1+1+2),(2+1+1+1)\}$.

  For the case $(1+2+1+1)$ we have $\mu=\rho_2=\rho_3=(1)\in \mathsf{P}_1(1)$
  and $\rho_1=(2,1)\in \mathsf{P}_2(3)$. Its contribution is
  \[
   \binom{4}{1,1,1,1} \binom{1}{0,1,0,0}
    1! (2-1)!(1-1)!(1-1)! = 24 \;.
\]
Clearly, the cases $(1+1+2+1)$ and $(1+1+1+2)$ contribute the same weight.

Remains the case $(2+1+1+1)$ with $\rho_1=\rho_2=\rho_3=(1)\in \mathsf{P}_1(1)$
and $\mu=(2,1)\in  \mathsf{P}_2(3)$. Its contribution is
  \[
   \binom{4}{1,1,1,1} \binom{1}{1,0,0,0}
    2!(1-1)!(1-1)!(1-1)!=48\;.
\]
This sums to $24+24+24+48=5!$ as expected.
\end{example}

\section{Combinatorial argument}

\label{sec:combinatorial-proof}

The goal of this section is to give a proof of Theorem~\ref{conj:main}.
First of all, we discuss the structure of the partitions in $\mathsf{P}_s(2s-k-l)$ for $0\leq k+l\leq s$. Conveniently, any partition can be represented as a formal product $\prod_{i=1}^\infty \underline{i}\,^{n_i}$, meaning that it has $n_i$ parts of length $i$, $i=1,2,3,\dots$, and we use this notation throughout this section (for instance, the product of two partitions in this notation is the union of the partitions in the standard notation).

\begin{lemma} If $\nu\in \mathsf{P}_s(2s-k-l)$, then it can be written as 
	\[
	\nu = \underline{1}\,^{k+l} \prod_{j=1}^{N} (\underline{1}\,^{a_j-2}\,\underline{a_j})
	\]
	for some $a_1,\dots,a_{N}\geq 2$ such that $k+l+\sum_{j=1}^N (a_j-1) = s$.
\end{lemma}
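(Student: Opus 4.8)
The plan is to unfold the definition of $\mathsf{P}_s(2s-k-l)$ into the two numerical constraints it encodes---``$s$ parts'' and ``sum equal to $2s-k-l$''---and then simply separate the parts of $\nu$ equal to $1$ from those of size at least $2$ and count. Concretely, I would write $\nu = \prod_{i=1}^\infty \underline{i}\,^{n_i}$ with $\sum_{i\geq 1} n_i = s$ and $\sum_{i\geq 1} i\,n_i = 2s-k-l$, and let $a_1,\dots,a_N$ denote the parts of $\nu$ that are $\geq 2$, so that $N = \sum_{i\geq 2} n_i = s - n_1$.

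The key computation is to subtract the ``number of parts'' identity from the ``sum'' identity, both restricted to parts of size $\geq 2$:
\[
\sum_{j=1}^N (a_j - 1) = \Big(\sum_{i\geq 2} i\,n_i\Big) - \Big(\sum_{i\geq 2} n_i\Big) = (2s-k-l-n_1) - (s-n_1) = s-k-l,
\]
which is exactly the relation $k+l+\sum_{j=1}^N(a_j-1) = s$ asserted in the statement. Since each $a_j\geq 2$ gives $a_j-1\geq 1$, this also yields $N\leq s-k-l$, hence $\sum_{j=1}^N (a_j-2) = (s-k-l)-N \geq 0$. This nonnegativity is precisely what guarantees that pulling out $a_j-2$ copies of $\underline{1}$ for each $j$, together with $k+l$ additional copies of $\underline{1}$, does not exceed the actual number $n_1$ of $1$'s occurring in $\nu$: indeed
\[
n_1 = s - N = (k+l) + \big((s-k-l)-N\big) = (k+l) + \sum_{j=1}^N (a_j-2),
\]
so that $\nu = \underline{1}\,^{n_1}\prod_{j=1}^N \underline{a_j} = \underline{1}\,^{k+l}\prod_{j=1}^N(\underline{1}\,^{a_j-2}\,\underline{a_j})$ with all $a_j\geq 2$, as claimed.

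I do not expect any genuine obstacle here; the only point deserving a moment's care is checking consistency of the two nonnegativity facts ($s-k-l\geq 0$ and $n_1\geq k+l$) with the standing hypotheses. Both are automatic: nonemptiness of $\mathsf{P}_s(2s-k-l)$ already forces $2s-k-l\geq s$, i.e.\ $s\geq k+l$, and then $n_1 = s-N\geq s-(s-k-l) = k+l$ follows from $N\leq s-k-l$. Thus the whole argument is a short piece of bookkeeping rather than anything structural, and it can be written up in a handful of lines.
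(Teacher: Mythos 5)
Your argument is correct and complete: the two constraints encoded in $\nu\in\mathsf{P}_s(2s-k-l)$, namely $\sum_i n_i=s$ and $\sum_i i\,n_i=2s-k-l$, immediately give $\sum_{j=1}^N(a_j-1)=s-k-l$ and $n_1=(k+l)+\sum_{j=1}^N(a_j-2)$, which is exactly the claimed factorization. The paper proves the same lemma differently: it first records the special case $k+l=0$ (for $\nu\in\mathsf{P}_s(2s)$ the number of $1$'s equals $\sum_i(a_i-2)$) and then runs an induction on $k+l$, observing that for $k+l\geq 1$ the partition must contain a part equal to $1$, which can be stripped off so that $\nu\cdot\underline{1}^{-1}\in\mathsf{P}_{s-1}(2(s-1)-(k+l-1))$ falls under the induction hypothesis. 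Your single direct computation subsumes both the base case and the inductive step, so it is the more economical route; the paper's induction buys nothing extra here beyond stylistic uniformity with the inductive arguments used later in the section (e.g.\ Lemma~\ref{lem:Comb-l=1} and Corollary~\ref{cor:ReformulationHolds}). One tiny presentational remark: your equality $n_1=(k+l)+\sum_j(a_j-2)$ already settles the ``enough $1$'s available'' issue on its own, so the preceding sentence about nonnegativity guaranteeing this could be compressed, but nothing is missing or wrong.
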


\begin{proof} First, notice that if $\nu\in \mathsf{P}_s(2s)$ is equal to $(a_1,\dots,a_N,1,\dots,1)$, where $a_i\geq 2$ for $i=1,\dots,N$, then the number of $1$'s must be equal to $\sum_{i=1}^N (a_i-2)$. 
	
	Then we can proceed by induction on $k+l$, with the induction hypothesis being the statement of the lemma. To this end, just notice that if $\nu\in \mathsf{P}_s(2s-k-l)$ and $s\geq k+l\geq 1$, then at least one part of $\nu$ is equal to $1$. Then we can apply the induction hypothesis to $\nu\cdot\underline{1}^{-1}\in \mathsf{P}_{s-1}(2(s-1)-k-l+1)$ (here we let $\nu\cdot\underline{1}^{-1}$ denote the partition $\nu$ with one part of length $1$ removed).
\end{proof}

With this observation there is the following equivalent reformulation of Equation~\eqref{eq:Conjecture-Partitions}.

\begin{corollary}\label{cor:Equivalent-Reformulation} Equation~\eqref{eq:Conjecture-Partitions} is equivalent to the following identity:
	\begin{align} 
		s! & = \sum_{\footnotesize \substack {I_0\sqcup I_1\sqcup \cdots \sqcup I_l\\ = \{1,\dots,N\}}}
		\binom{k+l+\textstyle\sum\limits_{j=1}^N (a_j-2)}{\ k+\textstyle\sum\limits_{j\in I_0} (a_j-2),\ 1+\textstyle\sum\limits_{j\in I_1} (a_j-2),\ \dots\ ,\ 1+\textstyle\sum\limits_{j\in I_l} (a_j-2)\ }
		\notag \\ & \qquad \times
		\big(k+\textstyle\sum\limits_{j\in I_0} (a_j-1)\big)! \prod_{i=1}^l \big(\textstyle\sum\limits_{j\in I_i} (a_j-1)\big)!,
		\label{eq:CombinatorialReformulation}
	\end{align}
where $I_0,...,I_N$ are allowed to be empty.
\end{corollary}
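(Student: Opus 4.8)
The plan is to fix the data $(s,k,l)$ and a partition $\nu\in\mathsf{P}_s(2s-k-l)$, use the preceding lemma to write
\[
\nu=\underline{1}\,^{k+l}\prod_{j=1}^{N}\bigl(\underline{1}\,^{a_j-2}\,\underline{a_j}\bigr),\qquad k+l+\sum_{j=1}^{N}(a_j-1)=s,
\]
where $N$ is the number of parts of $\nu$ of length $\geq 2$ and $a_1,\dots,a_N$ are those parts (with multiplicity), and then to show that after a change of summation variables the right-hand side of \eqref{eq:Conjecture-Partitions} becomes, \emph{term by term}, the right-hand side of \eqref{eq:CombinatorialReformulation}. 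Since both left-hand sides equal $s!$, the two identities are then literally the same statement, which is exactly the asserted equivalence.

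The key step is to reinterpret the multiple sum in \eqref{eq:Conjecture-Partitions} as a sum over colourings of the parts of $\nu$ by the $l+1$ colours $0,1,\dots,l$, where colour $0$ collects the parts of $\mu$ and colour $i$ the parts of $\rho_i$. Writing $H_t$ for the number of parts of $\nu$ of length $t$ and $G^{(i)}_t$ for the number of parts of length $t$ receiving colour $i$, the multinomial coefficient of partitions factorises as $\binom{\nu}{\mu,\rho_1,\dots,\rho_l}=\prod_{t\geq 1}\binom{H_t}{G^{(0)}_t,\dots,G^{(l)}_t}$, so that $\sum_{(\mu,\rho_1,\dots,\rho_l)}\binom{\nu}{\mu,\rho_1,\dots,\rho_l}(\,\cdots)$ is precisely the sum over all tuples $\bigl(G^{(i)}_t\bigr)$ with $\sum_i G^{(i)}_t=H_t$, weighted by $\prod_{t\geq 1}\binom{H_t}{G^{(\cdot)}_t}$. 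One then translates the membership conditions: a partition with $P$ parts of total weight $Q$ lies in $\mathsf{P}_{r-l}(2r-k-2l)$ for some admissible $r$ exactly when $Q-2P=-k$, i.e.\ when $G^{(0)}_1=k+\sum_{t\geq 3}(t-2)G^{(0)}_t$; likewise $\rho_i\in\mathsf{P}_{p_i}(2p_i-1)$ for some $p_i\geq 1$ exactly when $G^{(i)}_1=1+\sum_{t\geq 3}(t-2)G^{(i)}_t$. Thus the colour distribution of the parts of length $1$ is \emph{determined} by that of the parts of length $\geq 2$; the inequalities $r-l\geq k$ and $p_i\geq 1$ hold automatically (and $(r-l)+p_1+\dots+p_l=s$ holds because $\nu$ has $s$ parts); the consistency $\sum_{i=0}^{l}G^{(i)}_1=H_1$ holds because $H_1=k+l+\sum_{t\geq 3}(t-2)H_t$, which is exactly the structure-lemma representation; and the surviving factorials become $(r-l)!=\bigl(k+\sum_{t\geq 2}(t-1)G^{(0)}_t\bigr)!$ and $(p_i-1)!=\bigl(\sum_{t\geq 2}(t-1)G^{(i)}_t\bigr)!$.

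Finally one notes that the remaining free data, the tuples $\bigl(G^{(i)}_t\bigr)_{t\geq 2}$ with $\sum_i G^{(i)}_t=H_t$, are in $\prod_{t\geq 2}\binom{H_t}{G^{(0)}_t,\dots,G^{(l)}_t}$-to-one correspondence with set partitions $I_0\sqcup I_1\sqcup\cdots\sqcup I_l=\{1,\dots,N\}$, via $G^{(i)}_t=\#\{j\in I_i:a_j=t\}$; this multiplicity is precisely the factor already appearing in the weight. Substituting and using $\sum_{j\in I_i}(a_j-2)=\sum_{t\geq 3}(t-2)G^{(i)}_t$ and $\sum_{j\in I_i}(a_j-1)=\sum_{t\geq 2}(t-1)G^{(i)}_t$ converts $\binom{H_1}{G^{(0)}_1,\dots,G^{(l)}_1}$ together with the factorials into exactly the summand of \eqref{eq:CombinatorialReformulation}, completing the reduction. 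The argument is a pure reindexing and I do not expect any genuine obstacle; the one place that needs care is the bookkeeping of the length-$1$ parts, which simultaneously play the roles of the ``hook feet'' $\underline{1}\,^{a_j-2}$ of the blocks and of the $k+l$ standalone parts $\underline{1}\,^{k+l}$ of $\nu$, together with the asymmetry between the offset $k$ for colour $0$ and the offset $1$ for colours $1,\dots,l$; getting these offsets and the consistency check $\sum_i G^{(i)}_1=H_1$ exactly right is the whole content of the verification.
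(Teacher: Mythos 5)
Your proposal is correct and follows essentially the same route as the paper's proof: both use the structure lemma to reduce the sum over partition tuples $(\mu,\rho_1,\dots,\rho_l)$ to a sum over labelled set partitions $I_0\sqcup\cdots\sqcup I_l=\{1,\dots,N\}$ of the parts of length $\geq 2$, with the multinomial coefficient over the length-$1$ parts surviving as the weight. Your colouring bookkeeping with the counts $G^{(i)}_t$ is just an explicit version of the paper's observation that the number of splits inducing a given tuple is $\binom{\tilde\nu}{\tilde\mu,\tilde\rho_1,\dots,\tilde\rho_l}$ and that the ratio $\binom{\nu}{\mu,\rho_1,\dots,\rho_l}/\binom{\tilde\nu}{\tilde\mu,\tilde\rho_1,\dots,\tilde\rho_l}$ is exactly the binomial factor in \eqref{eq:CombinatorialReformulation}.
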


\begin{proof} Note that the coefficient $ \binom{\nu}{\mu, \rho_1,...,\rho_l} $ is non-trivial if an only if $\nu = \mu\cdot \prod_{i=1}^l \rho_i$. Moreover, if $\nu = \underline{1}\,^{k+l} \prod_{j=1}^{N} (\underline{1}\,^{a_j-2}\,\underline{a_j})$, then $\mu = \underline{1}\,^{k} \prod_{j\in I_0} (\underline{1}\,^{a_j-2}\,\underline{a_j})$ and $\rho_i= \underline{1}\, \prod_{j\in I_i} (\underline{1}\,^{a_j-2}\,\underline{a_j})$, $i=1,\dots,l$, for some $I_0\sqcup \cdots \sqcup I_l = \{1,\dots,N\}$. In particular, in this case
	\begin{align*}
		k+\textstyle\sum\limits_{j\in I_0} (a_j-1) & = r-l; \\
		\textstyle\sum\limits_{j\in I_i} (a_j-1) & = p_i-1, & i&=1,\dots,l.
	\end{align*} 
	Note also that the number of splits $I_0\sqcup \cdots \sqcup I_l = \{1,\dots,N\}$ that gives exactly the same tuples of partitions $(\mu,\rho_1,\dots,\rho_l)$ is equal to $\binom{\tilde \nu }{\tilde \mu, \tilde \rho_1,...,\tilde \rho_l} $, where $\tilde \nu$ (resp., $\tilde \mu$, $\tilde \rho_i$) is equal to the partition $\nu$ (resp., $\tilde \mu$, $\tilde \rho_i$) with all parts of length $1$ removed. The observation that 
	\begin{align*}
		&
		\dfrac {\displaystyle\binom{ \nu }{\mu,\rho_1,..., \rho_l} } 
		{\displaystyle \binom{\tilde \nu }{\tilde \mu, \tilde \rho_1,...,\tilde \rho_l} } = 
		\binom{k+l+\textstyle\sum\limits_{j=1}^N (a_j-2)}{\ k+\textstyle\sum\limits_{j\in I_0} (a_j-2),\ 1+\textstyle\sum\limits_{j\in I_1} (a_j-2),\ \dots\ ,\ 1+\textstyle\sum\limits_{j\in I_l} (a_j-2)\ }
	\end{align*}
	completes the proof. 
\end{proof}

Equation \eqref{eq:CombinatorialReformulation} can be derived by induction from its special case for $l=1$: 

\begin{lemma}\label{lem:Comb-l=1} The following identity holds:
	\begin{align} 
		\big(1+k+\textstyle\sum\limits_{j=1}^N (a_j-1)\big)! & = \sum_{\footnotesize \substack {I_0\sqcup I_1 = \\ \{1,\dots,N\}}}
		\binom{k+1+\textstyle\sum\limits_{j=1}^N (a_j-2)}{\ k+\textstyle\sum\limits_{j\in I_0} (a_j-2),\ 1+\textstyle\sum\limits_{j\in I_1} (a_j-2)\ }
		\notag \\ & \qquad \times
		\big(k+\textstyle\sum\limits_{j\in I_0} (a_j-1)\big)! \big(\textstyle\sum\limits_{j\in I_1} (a_j-1)\big)! 
		\label{eq:CombinatorialReformulation-l=1}
	\end{align}	
\end{lemma}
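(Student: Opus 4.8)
The plan is to prove Lemma~\ref{lem:Comb-l=1} by induction on $M:=\sum_{j=1}^N(a_j-1)$ (with $k\ge 0$ and the parts $a_j\ge 2$ free), showing that its right-hand side, which I denote $\mathcal R_k(a_1,\dots,a_N)$, equals $(1+k+M)!$. For $M=0$, i.e. $N=0$, it is $\binom{k+1}{k,1}\,k!=(k+1)!$. For the inductive step I distinguish two cases according to whether some part equals $2$.

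If some part equals $2$, say $a_N=2$, then for a fixed splitting $I_0'\sqcup I_1'=\{1,\dots,N-1\}$ the two terms of $\mathcal R_k$ with $a_N$ put on either side share the same multinomial coefficient and factorial normalisation, and their sum produces the factor $(k+B_0'+1)+(B_1'+1)=k+M+1$, where $B_i':=\sum_{j\in I_i'}(a_j-1)$ and $B_0'+B_1'=M-1$. Summing over $I_0'\sqcup I_1'$ gives $\mathcal R_k(a_1,\dots,a_N)=(k+M+1)\,\mathcal R_k(a_1,\dots,a_{N-1})=(k+M+1)(k+M)!=(1+k+M)!$ by the inductive hypothesis (applied to total $M-1$). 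If instead every part is $\ge 3$, pick any part $a_N$ and set $c:=a_N-2\ge 1$. The key point is that the $N\in I_0$ part of $\mathcal R_k(a_1,\dots,a_N)$ coincides term by term with the part of $\mathcal R_{k+c}(a_1,\dots,a_{N-1},2)$ in which the extra part $2$ lies in $I_0$: adding a part $2$ (a length-one block, contributing $0$ to $\sum(a_j-2)$) on side $0$ at level $k+c$ reproduces exactly the data of the part $a_N$ on side $0$ at level $k$, because $c=(a_N-1)-1$. Hence that $N\in I_0$ part equals $\mathcal R_{k+c}(a_1,\dots,a_{N-1},2)$ minus its ``$2\in I_1$'' part; since $(a_1,\dots,a_{N-1},2)$ has total $M-c<M$, the inductive hypothesis gives $\mathcal R_{k+c}(a_1,\dots,a_{N-1},2)=(1+(k+c)+(M-c))!=(1+k+M)!$. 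Therefore $\mathcal R_k(a_1,\dots,a_N)=(1+k+M)!-(\mathrm{A})+(\mathrm{B})$, where $(\mathrm{A})$ is that ``$2\in I_1$'' part and $(\mathrm{B})$ is the $N\in I_1$ part of $\mathcal R_k(a_1,\dots,a_N)$; it remains to show $(\mathrm{A})=(\mathrm{B})$.

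Pulling out the common normalisation $(k+1+C'+c)!$ with $C':=\sum_{j<N}(a_j-2)$, the equality $(\mathrm{A})=(\mathrm{B})$ becomes $\Psi_{N-1}(k+c,1)=\Psi_{N-1}(k,c+1)$, where for a subset $I$ we put $\phi(x,I):=(x+\sum_{j\in I}(a_j-1))!\,/\,(x+\sum_{j\in I}(a_j-2))!$ (a polynomial in $x$ of degree $|I|$) and $\Psi_n(x,y):=\sum_{I\sqcup J=\{1,\dots,n\}}\phi(x,I)\,\phi(y,J)$. As $(k+c)+1=k+(c+1)$, this will follow from the claim that $\Psi_n(x,y)$ depends only on $x+y$; since $\Psi_n$ is a polynomial, it suffices to prove $\Psi_n(x,y)=\Psi_n(x-1,y+1)$. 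For this I would use $\phi(x,I)-\phi(x-1,I)=|I|\cdot(x-1+\sum_{j\in I}(a_j-1))!\,/\,(x+\sum_{j\in I}(a_j-2))!$ (and the analogous formula in $y$), write $\phi(x,I)\phi(y,J)-\phi(x-1,I)\phi(y+1,J)=[\phi(x,I)-\phi(x-1,I)]\phi(y,J)+\phi(x-1,I)[\phi(y,J)-\phi(y+1,J)]$, and in each of the two resulting sums distinguish the index $n^{*}$ lying in $I$ (resp.\ $J$). A short computation, again using $a_j-2=(a_j-1)-1$, collapses the result to
\[
\Psi_n(x,y)-\Psi_n(x-1,y+1)=\sum_{n^{*}=1}^{n}\Bigl[\Psi_{\{1,\dots,n\}\setminus\{n^{*}\}}(x+a_{n^{*}}-2,\,y)-\Psi_{\{1,\dots,n\}\setminus\{n^{*}\}}(x-1,\,y+a_{n^{*}}-1)\Bigr],
\]
and each bracket vanishes by induction on $n$, because $(x+a_{n^{*}}-2)+y=(x-1)+(y+a_{n^{*}}-1)$.

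The genuinely clever point, and the place where care is needed, is this last step: recognising that removing one index from $\Psi_n$ produces two smaller $\Psi$'s with equal argument-sums, so that the induction on $n$ closes; everything else is bookkeeping with the involution identity \eqref{eq:CombinatorialReformulation-l=1} and ratios of factorials. Once Lemma~\ref{lem:Comb-l=1} holds, the general identity \eqref{eq:CombinatorialReformulation} of Corollary~\ref{cor:Equivalent-Reformulation} follows by induction on $l$ (expand the $l$-th distinguished group by the $l=1$ case and apply the inductive hypothesis to the first $l-1$ groups), and hence so does Theorem~\ref{conj:main}.
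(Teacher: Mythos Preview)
Your proof is correct. The paper takes a different route: it first symmetrises the identity by setting $b_j:=a_j-2$ for $j=1,\dots,N$ and introducing one extra variable $b_{N+1}:=k-1$, so that the two sides $(I_0,I_1)$ become interchangeable and the whole statement becomes a polynomial identity in $b_1,\dots,b_{M}$ (with $M:=N+1$) of degree $M-1$. A single induction on $M$ via the finite difference $b_M\mapsto b_M-1$ then suffices, both sides satisfying the same recursion $F_M(b)-F_M(b_1,\dots,b_M-1)=\sum_{i<M}F_{M-1}(b_1,\dots,b_i+b_M,\dots,b_{M-1})$, anchored at $b=0$. Your argument instead keeps $k$ separate and runs two nested inductions: the outer one on $M=\sum_j(a_j-1)$, where Case~1 (some $a_j=2$) gives the reduction directly, and an inner one on $n$ establishing the standalone lemma that $\Psi_n(x,y)=\sum_{I\sqcup J}\phi(x,I)\phi(y,J)$ depends only on $x+y$; your Case~2 then reduces to $\Psi_{N-1}(k+c,1)=\Psi_{N-1}(k,c+1)$. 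The paper's symmetrisation buys economy (one induction, no case split on whether a part equals $2$), while your auxiliary $\Psi_n$ lemma is a clean statement in its own right; at bottom both arguments rest on the same finite-difference mechanism that removes one index and absorbs its weight into a shifted argument of the lower-order expression. Your closing remark about passing from $l=1$ to general $l$ and hence to Theorem~\ref{conj:main} matches exactly what the paper does next.
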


\begin{proof}
	Denote $M\coloneqq N+1$, $b_j\coloneqq a_j-2$ for $j=1,\dots,M-1$, and $b_{M}\coloneqq k-1$. Then we can rewrite~\eqref{eq:CombinatorialReformulation-l=1} in a much more symmetric way, namely,
	\begin{align*}
		\big(1+M+{\textstyle\sum\limits_{j=1}^M} b_j \big)! & = \frac 12 \sum_{\footnotesize \substack {I\sqcup J = \\ \{1,\dots,M\}}} \frac{\big(2+\textstyle\sum\limits_{j=1}^M b_j\big)!}{\big(1+\textstyle\sum\limits_{j\in I} b_j \big)!\big(1+\textstyle\sum\limits_{j\in J} b_j \big)!}
		\big(|I|+\textstyle\sum\limits_{j\in I} b_j \big)!\big(|J|+\textstyle\sum\limits_{j\in J} b_j \big)!,
	\end{align*}
	and furthermore as an identity for the polynomials in $b_1,\dots,b_M$ of degree $M-1$:
	\begin{align} \label{eq:Reformulation-l=1}
		\frac{\big(1+M+{\textstyle\sum\limits_{j=1}^M} b_j \big)!}{\big(2+\textstyle\sum\limits_{j=1}^M b_j\big)!} = \frac 12 \sum_{\footnotesize \substack {I\sqcup J = \\ \{1,\dots,M\}}} \frac{\big(|I|+\textstyle\sum\limits_{j\in I} b_j \big)!}{\big(1+\textstyle\sum\limits_{j\in I} b_j \big)!}
		\cdot 
		\frac{\big(|J|+\textstyle\sum\limits_{j\in J} b_j \big)!}{\big(1+\textstyle\sum\limits_{j\in J} b_j \big)!}.
	\end{align}
	In this form, it is obvious for $M=0,1$ and for any $M\geq 2$, $b_1,\dots,b_M=0$. In order to establish this identity in general, we proceed by induction on $M$. We denote the left hand side by $L_M(b_1,\dots,b_M)$ and the right hand side by $R_M(b_1,\dots,b_M)$ and show that for any $i=1,\dots,M$
	\begin{align} \label{eq:L-L=R-R}
		& L_M(b_1,\dots,b_M)-L_M(b_1,\dots,b_{i-1},b_i-1,b_{i+1},\dots,b_M) 
		\\ \notag
		& = R_M(b_1,\dots,b_M)-R_M(b_1,\dots,b_{i-1},b_i-1,b_{i+1},\dots,b_M)
	\end{align}
	under the assumption that the identity holds for any parameters for $M-1$. Note that since both $L_M$ and $R_M$ are symmetric in their arguments, it is sufficient to prove~\eqref{eq:L-L=R-R} for $i=M$. To this end, we have:
	\begin{align*}
		& L_M(b_1,\dots,b_M)-L_M(b_1,\dots,b_M-1) 
		\\ & = 	\frac{\big(1+M+{\textstyle\sum\limits_{j=1}^M} b_j \big)!}{\big(2+\textstyle\sum\limits_{j=1}^M b_j\big)!}
		- \frac{\big(M+{\textstyle\sum\limits_{j=1}^M} b_j \big)!}{\big(1+\textstyle\sum\limits_{j=1}^M b_j\big)!} 
		=  (M-1) \frac{\big(M+{\textstyle\sum\limits_{j=1}^M} b_j \big)!}{\big(2+\textstyle\sum\limits_{j=1}^M b_j\big)!}
		\\ &
		= \sum_{i=1}^{M-1} L_{M-1}(b_1,\dots,b_{i-1},b_i+b_M,b_{i+1},\dots,b_{M-1}).
	\end{align*}
	On the other hand, with a similar computation,
	\begin{align*}
		& R_M(b_1,\dots,b_M)-R_M(b_1,\dots,b_M-1) 	\\
		& = \frac 12 \sum_{\footnotesize \substack {I\sqcup J = \\ \{1,\dots,M\} \\ I\ni M}} ({|I|-1})\cdot \frac{\big(|I|-1+\textstyle\sum\limits_{j\in I} b_j \big)!}{\big(1+\textstyle\sum\limits_{j\in I} b_j \big)!}
		\cdot 
		\frac{\big(|J|+\textstyle\sum\limits_{j\in J} b_j \big)!}{\big(1+\textstyle\sum\limits_{j\in J} b_j \big)!}
		\\ & \qquad 
		+ \frac 12 \sum_{\footnotesize \substack {I\sqcup J = \\ \{1,\dots,M\} \\ J\ni M}} (|J|-1)\cdot \frac{\big(|I|+\textstyle\sum\limits_{j\in I} b_j \big)!}{\big(1+\textstyle\sum\limits_{j\in I} b_j \big)!}
		\cdot 
		\frac{\big(|J|-1+\textstyle\sum\limits_{j\in J} b_j \big)!}{\big(1+\textstyle\sum\limits_{j\in J} b_j \big)!}
		\\ & = \sum_{i=1}^{M-1} R_{M-1}(b_1,\dots,b_{i-1},b_i+b_M,b_{i+1},\dots,b_{M-1}).
	\end{align*}
	By induction assumption it implies that~\eqref{eq:L-L=R-R} holds, and since
	\[
	L_M(0,\dots,0)= R_M(0,\dots,0)
	\]
	we conclude that
	\[
	L_M(b_1,\dots,b_M)= R_M(b_1,\dots,b_M)
	\]
	for any values of $b_1,\dots,b_M$. (Formally speaking, the difference equation implies this equality only for any integer arguments $b_i$'s, but since it is an equality of polynomials and integers form a Zariski dense set, we have this equality for any complex values of $b_i$'s). 
	
	Thus~\eqref{eq:Reformulation-l=1} holds, and therefore~\eqref{eq:CombinatorialReformulation-l=1} holds as well. 
\end{proof}

\begin{corollary} \label{cor:ReformulationHolds} Equation~\eqref{eq:CombinatorialReformulation} holds for any $l\geq 0$.
\end{corollary}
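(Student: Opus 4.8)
The plan is to establish Corollary~\ref{cor:ReformulationHolds}, i.e.\ Equation~\eqref{eq:CombinatorialReformulation} for every $l\geq 0$, by induction on $l$; the base cases $l=0$ and $l=1$ are, respectively, trivial and exactly Lemma~\ref{lem:Comb-l=1}, and the case $l=1$ is moreover fed back into every inductive step. The base case $l=0$ is immediate: the only admissible decomposition is $I_0=\{1,\dots,N\}$ with all other blocks empty, the multinomial coefficient equals $1$, and the right-hand side of~\eqref{eq:CombinatorialReformulation} collapses to $\big(k+\sum_{j=1}^N(a_j-1)\big)!$, which is $s!$ since the relation $s=k+l+\sum_j(a_j-1)$ of Corollary~\ref{cor:Equivalent-Reformulation} reduces to $k+\sum_j(a_j-1)$ when $l=0$.

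For the inductive step I would fix $l\geq 2$ and assume~\eqref{eq:CombinatorialReformulation} with $l$ replaced by $l-1$, for all values of $k$, $N$ and $a_1,\dots,a_N$; this is legitimate because, after the substitution $b_j=a_j-2$ of Lemma~\ref{lem:Comb-l=1}, \eqref{eq:CombinatorialReformulation} is a polynomial identity in the $b_j$ and carries no constraint forcing the $a_j$ to come from an actual partition. In the sum over ordered set-partitions $I_0\sqcup I_1\sqcup\cdots\sqcup I_l=\{1,\dots,N\}$ I would group the first $l$ blocks by setting $K:=I_0\cup\cdots\cup I_{l-1}$ and $I_l:=\{1,\dots,N\}\setminus K$, and sum first over $I_0\sqcup\cdots\sqcup I_{l-1}=K$, then over $K$ (equivalently over $I_l$). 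The key algebraic fact is that the multinomial coefficient of partitions factorises by the chain rule:
\begin{align*}
&\binom{k+l+\sum_{j=1}^N(a_j-2)}{\,k+\sum_{j\in I_0}(a_j-2),\;1+\sum_{j\in I_1}(a_j-2),\;\dots,\;1+\sum_{j\in I_l}(a_j-2)\,}
\\
&\quad=\binom{k+l+\sum_{j=1}^N(a_j-2)}{\,(k+l-1)+\sum_{j\in K}(a_j-2),\;1+\sum_{j\in I_l}(a_j-2)\,}
\\
&\qquad\times\binom{(k+l-1)+\sum_{j\in K}(a_j-2)}{\,k+\sum_{j\in I_0}(a_j-2),\;1+\sum_{j\in I_1}(a_j-2),\;\dots,\;1+\sum_{j\in I_{l-1}}(a_j-2)\,},
\end{align*}
and, correspondingly, the product $\big(k+\sum_{j\in I_0}(a_j-1)\big)!\prod_{i=1}^l\big(\sum_{j\in I_i}(a_j-1)\big)!$ splits off the single factor $\big(\sum_{j\in I_l}(a_j-1)\big)!$.

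With this factorisation in place, the inner sum over $I_0\sqcup\cdots\sqcup I_{l-1}=K$ is \emph{verbatim} the right-hand side of~\eqref{eq:CombinatorialReformulation} for the parameter $l-1$, the index set $K$ in place of $\{1,\dots,N\}$, and the same $k$; by the induction hypothesis it equals $\big(k+(l-1)+\sum_{j\in K}(a_j-1)\big)!$. Substituting this back, the remaining sum over $K\sqcup I_l=\{1,\dots,N\}$ is exactly the right-hand side of Equation~\eqref{eq:CombinatorialReformulation-l=1} of Lemma~\ref{lem:Comb-l=1} with $k$ replaced by $k+l-1$, the block $I_0$ read as $K$ and the block $I_1$ read as $I_l$; hence it evaluates to $\big(1+(k+l-1)+\sum_{j=1}^N(a_j-1)\big)!=\big(k+l+\sum_{j=1}^N(a_j-1)\big)!=s!$. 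This closes the induction, and by Corollary~\ref{cor:Equivalent-Reformulation} it is equivalent to, and hence proves, Theorem~\ref{conj:main}.

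I do not anticipate a genuine obstacle in this step: the substantive combinatorics is already carried out in Lemma~\ref{lem:Comb-l=1} via the difference-equation argument, and what remains is essentially bookkeeping. The two points that need care are (i) verifying that the ``$s$''-parameters at the three levels match as the factorisation dictates --- namely $s=k+l+\sum_j(a_j-1)$ globally, $k+(l-1)+\sum_{j\in K}(a_j-1)$ for the inner application with parameter $l-1$, and again $s=(k+l-1)+1+\sum_j(a_j-1)$ for the outer application of the $l=1$ identity with $k$ shifted to $k+l-1$ --- and (ii) checking that empty blocks ($K=\emptyset$ or $I_l=\emptyset$) cause no difficulty, which is automatic because both~\eqref{eq:CombinatorialReformulation} and~\eqref{eq:CombinatorialReformulation-l=1} already permit empty index sets.
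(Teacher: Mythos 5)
Your proposal is correct and follows essentially the same route as the paper: induction on $l$ with Lemma~\ref{lem:Comb-l=1} as the $l=1$ base case, splitting off the last block, factorising the multinomial coefficient by the chain rule, applying the induction hypothesis to the inner sum, and then invoking Lemma~\ref{lem:Comb-l=1} again with $k$ shifted by $l-1$ (the paper phrases the step as $l\to l+1$ rather than $l-1\to l$, which is purely cosmetic). Your bookkeeping of the $s$-parameters and the treatment of empty blocks match the paper's argument.
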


\begin{proof} We proceed by induction on $l$. For $l=0$ Equation~\eqref{eq:CombinatorialReformulation} reads $s!=s!$ and for $l=1$ it is proved in Lemma~\ref{lem:Comb-l=1}. Assume it is proved for some $l$. Then, for $l+1$ we have:
	\begin{align} 
		& \sum_{\footnotesize \substack {I_0\sqcup I_1\sqcup \cdots \sqcup I_{l+1}\\ = \{1,\dots,N\}}}
		\binom{k+l+1+\textstyle\sum\limits_{j=1}^N (a_j-2)}{\ k+\textstyle\sum\limits_{j\in I_0} (a_j-2),\ 1+\textstyle\sum\limits_{j\in I_1} (a_j-2),\ \dots\ ,\ 1+\textstyle\sum\limits_{j\in I_{l+1}} (a_j-2)\ }
		\notag \\ \notag & \qquad \times
		\big(k+\textstyle\sum\limits_{j\in I_0} (a_j-1)\big)! \prod_{i=1}^{l+1} \big(\textstyle\sum\limits_{j\in I_i} (a_j-1)\big)! 
		\\ \notag & 
		= \sum_{\footnotesize \substack {J \sqcup I_{l+1}\\ = \{1,\dots,N\}}}
		\binom{k+l+1+\textstyle\sum\limits_{j=1}^N (a_j-2)}{\ k+l+\textstyle\sum\limits_{j\in J} (a_j-2) ,\ 1+\textstyle\sum\limits_{j\in I_{l+1}} (a_j-2)\ }
		\\ \notag & \qquad \bigg[
		\sum_{\footnotesize \substack {I_0\sqcup I_1\sqcup \cdots \sqcup I_{l}\\ = J}} \binom{k+l+\textstyle\sum\limits_{j\in J} (a_j-2)}{\ k+\textstyle\sum\limits_{j\in I_0} (a_j-2),\ 1+\textstyle\sum\limits_{j\in I_1} (a_j-2),\ \dots\ ,\ 1+\textstyle\sum\limits_{j\in I_{l}} (a_j-2)\ }
		\notag \\ \notag & \qquad \times
		\big(k+\textstyle\sum\limits_{j\in I_0} (a_j-1)\big)! \prod_{i=1}^{l} \big(\textstyle\sum\limits_{j\in I_i} (a_j-1)\big)! \bigg] \cdot \big(\textstyle\sum\limits_{j\in I_{l+1}} (a_j-1)\big)!.
	\end{align}
	Note that by the induction assumption, the expression in the square brackets is equal to $\big(k+l+\textstyle\sum_{j\in J} (a_j-1)\big)!$. Thus, the full expression above is equal to 
	\begin{align*} 
		&  \sum_{\footnotesize \substack {J \sqcup I_{l+1}\\ = \{1,\dots,N\}}}
		\binom{k+l+1+\textstyle\sum\limits_{j=1}^N (a_j-2)}{\ k+l+\textstyle\sum\limits_{j\in J} (a_j-2) ,\ 1+\textstyle\sum\limits_{j\in I_{l+1}} (a_j-2)\ }
		\\ \notag & \times
		\big(k+l+\textstyle\sum\limits_{j\in J} (a_j-1)\big)!  \big(\textstyle\sum\limits_{j\in I_{l+1}} (a_j-1)\big)!,
	\end{align*}
	which is equal to $\big(k+l+1+\sum_{j=1}^N (a_j-1)\big)!$ by Lemma~\ref{lem:Comb-l=1}.
\end{proof}

Now we are ready to complete the proof of Theorem~\ref{conj:main}.

\begin{proof}[Proof of Theorem~\ref{conj:main}]
	By Corollary~\ref{cor:Equivalent-Reformulation}, the statement of the theorem is equivalent to Equation~\eqref{eq:CombinatorialReformulation}. The latter equation holds by Corollary~\eqref{cor:ReformulationHolds}
\end{proof}

\section{Outlook}

In \cite{Hock:2021tbl} the first and the last named authors have introduced and studied a family of 
meromorphic differentials $\omega^{(0)}_n(z_1,...,z_n)$ defined via
(\ref{om02}) and an involution identity (\ref{eq:flip-om}), where
$\iota z=\frac{az+b}{cz-a}$ for $a^2+bc\neq 0$ is a holomorphic involution
of $\mathbb{P}^1$. In this paper we settle the so-far open question about the symmetry
of the $\omega^{(0)}_n(z_1,...,z_n)$ by its reduction to  a combinatorial
Theorem~\ref{conj:main} about integer partitions. 


As also shown in \cite{Hock:2021tbl}, the quartic analogue of the Kontsevich
matrix model is an example where the
$\omega^{(0)}_n(z_1,...,z_n)$ are of this form. In \cite{Hock:2023nki}  the first and the last named authors succeeded
in proving recursion formulae also for $\omega^{(1)}_n(z_1,...,z_n)$ of genus $g=1$
by another strategy (extended loop equations). The proof that also
the  $\omega^{(1)}_n(z_1,...,z_n)$ are symmetric is left for the future.

\section*{Acknowledgements}

AH was supported through the Walter-Benjamin fellowship\footnote{\
  ``Funded by
the Deutsche Forschungsgemeinschaft (DFG, German Research
Foundation) -- Project-ID 465029630 and Project-ID 51922044''} and
through the project \ ``Topological Recursion, Duality and
Applications''\footnote{\ ``Funded by
the Deutsche Forschungsgemeinschaft (DFG, German Research
Foundation) -- Project-ID 551478549''}. 
SS was supported by the Dutch Research Council. 
RW  was
supported\footnote{\ ``Funded by
  the Deutsche Forschungsgemeinschaft (DFG, German Research
  Foundation) -- Project-ID 427320536 -- SFB 1442, as well as under
  Germany's Excellence Strategy EXC 2044 390685587, Mathematics
  M\"unster: Dynamics -- Geometry -- Structure''} by the Cluster of
Excellence \emph{Mathematics M\"unster} and the CRC 1442 \emph{Geometry:
  Deformations and Rigidity}.


\end{document}